\tikzstyle{new style 0}=[fill={rgb,255: red,255; green,16; blue,20}, draw=black, shape=circle]
\tikzstyle{new style 1}=[fill={rgb,255: red,66; green,255; blue,33}, draw=black, shape=circle]
\tikzstyle{new style 2}=[fill={rgb,255: red,196; green,58; blue,255}, draw=black, shape=circle]
\tikzstyle{new style 3}=[fill={rgb,255: red,10; green,104; blue,255}, draw=black, shape=circle]
\tikzstyle{new style 4}=[fill=white, draw={rgb,255: red,59; green,141; blue,21}, shape=circle]
\tikzstyle{new edge style 0}=[<->, fill=white]
\tikzstyle{new edge style 1}=[<-]
\tikzstyle{new edge style 2}=[-, draw={rgb,255: red,48; green,30; blue,255}, fill=none]
\tikzstyle{new edge style 3}=[-, draw={rgb,255: red,67; green,255; blue,15}]
\tikzstyle{purple}=[-, draw={rgb,255: red,138; green,20; blue,255}]
\tikzstyle{new edge style 4}=[-, draw={rgb,255: red,250; green,0; blue,0}]
\tikzstyle{new edge style 5}=[dashed, fill=none, draw=black, -]
\pgfplotsset{compat=newest}
\tikzset{
  block/.style    = {draw, thick, rectangle, minimum height = 3em, minimum width = 3em},
  causalvar/.style      = {draw, circle, node distance = 2cm}
}
\newtheorem{ass}{Assumption}
\newtheorem{defi}{Definition}
\newtheorem*{defi*}{Definition}
\newtheorem{remark}{Remark}
\newtheorem*{remark*}{Remark}
\newtheorem{theo}{Theorem}[section]
\newtheorem{cor}{Corollary}[theo]
\newcommand{\X}{\mathcal{X}}
\newcommand{\XS}{\mathcal{X}_\mathcal{S}}
\newcommand{\Pro}{\mathbb{P}}
\newcommand{\Qro}{\mathbb{Q}}
\newcommand{\Sam}{\mathbb{S}}
\newcommand{\R}{\mathbb{R}}
\newcommand{\E}{\mathbb{E}}
\newcommand{\uh}{\widehat{u}}
\newcommand{\tauh}{\widehat{\tau}}
\newcommand{\tu}{u}
\newcommand\OX{\mathcal{O}_{\X}}
\newcommand\Ra{\mathcal{R}}
\newcommand\Up{\mathcal{U}}
\newcommand\tauhat{\widehat{\tau}}
\newcommand\AUUC{\operatorname{AUUC}}
\newcommand\AUNUC{\operatorname{AUNUC}}
\newcommand\AUNUCX{\operatorname{AUNUC^\X}}
\newcommand\VN{\operatorname{V_N}}
\newcommand\VNX{\operatorname{V_N^\X}}
\newcommand{\Voh}{\widehat{V}_1}
\newcommand{\Vth}{\widehat{V}_2}
\newcommand{\Vnh}{\widehat{V}_\nu}
\newcommand\D{\mathrm{d}}
\newcommand\one{\mathbbm{1}}
\newcommand\xp{x^\prime}
\newcommand\up{u^\prime}
\newcommand\rp{r^\prime}
\newcommand{\rew}[1][]{  
  \ifthenelse{\isempty{#1}}
    {\overrightarrow{y}}
    {\overrightarrow{y}^{(#1)}}
}
\newcolumntype{b}{>{\columncolor{magenta}}c}
\title{About Evaluation Metrics for Contextual Uplift Modeling}
\author{C. RENAUDIN, M. MARTIN}
\date{\today} 
\begin{document}

\begin{abstract}

In this tech report we discuss the evaluation problem of contextual uplift modeling from the causal inference point of view. More particularly, we instantiate the individual treatment effect (ITE) estimation, and its evaluation counterpart. First, we unify two well studied fields: the statistical ITE approach and its observational counterpart based on uplift study. Then we exhibit the problem of evaluation, based on previous work about ITE and uplift modeling. Moreover, we derive a new estimator for the uplift curve, built on logged bandit feedback dataset, that reduces its variance. We prove that AUUC fails on non randomized control trial (RCT) datasets and discuss some corrections and guidelines that should be kept in mind while using AUUC (re-balancing the population, local importance sampling using propensity score).
\end{abstract}
\maketitle
\tableofcontents
\section{Introduction}

Uplift modeling has been widely used to estimate the effect of a treatment on an outcome, at the user/individual level.
It has effectively been used in fields such as marketing and customer retention, to target
those customers that are most likely to respond due to the campaign or treatment.
This allows the selection of the subset of entities
for which the effect of a treatment will be "large enough" and, as such, allows the maximization of
the overall "reward". Specifically, it produces uplift scores which are used to essentially create a ranking between user inputs. From a budget constraint perspective, one aims to target first the customers with the highest "change in behavior" between being treated, and not treated.

In section \ref{sec:setting}, we first introduce the underlying statistical problem we aim at solving.
We formally define the AUUC maximization problem we want to solve, based on the fundamental problem of estimating treatment effect: for a given user we can only observe its outcome under the treatment he was facing.
ITE and Uplift modeling are very similar notions. They are formally introduced in section \ref{sec:setting}, as well as the assumptions guaranteeing that they are the same quantity. Another view of such a problem is through the lens of a two-armed bandit. Indeed, the ITE problem can be viewed as an instantiation of the off-line logged-bandit feedback problem, with binary action set $\{0,1 \}$. In order to identify ITE, one would need a full feedback for the outcome. The ITE can be reconstructed, based on the observed features, leveraging outcomes under both treatment and control.

In section \ref{sec:metrics} we define the usual metrics used to evaluate uplift models: PEHE and AUUC, and highlight both advantages and drawbacks. Specifically, we highlight AUUC problems: it should be used only for RCT studies and it needs special attention when the treatment group is not balanced (50\%-50\%). We generalize the theoretical AUUC quantity, based on the underlying distribution of the data.

We further present in Section \ref{sec:counter} some counter-examples, when the dataset is not RCT, i.e. $\Pro(t=1 \mid x)$  is not constant along features $x$, or if the treatment/control ratio of the dataset is not 50\%-50\%.

In Section \ref{section:unbalanced_dataset}, we introduce the generalized AUUC formula, and the "two axis" re-balancing to extend the AUUC metrics to non-RCT experiments.

Finally, in section \ref{sec:Vnu} we propose a new construction of the uplift curve, with new rules. We theoretically prove and empirically show that this new way of building the approximated uplift curve dramatically reduces the variance of derived AUUC.

\section{Related work}
This work is mainly built upon some recent works:
\begin{itemize}
    \item \cite{zhang2020unified} which presents the link between uplift and ITE.
    \item \cite{devriendt2020learning} which uses uplift methods in order to transfer them into the learning-to-rank community. This paper has the merit of highlighting the fact that in the uplift community, the metric that one should use is somehow messy. It summarizes it clearly in its main table.
    \item \cite{caron2020estimating} which recalls the different up to date uplift models that perform the best, and then test them on two semi-synthetic datasets. The only proposed metrics are ITE based, like PEHE. It does not mention AUUC.
    \item \cite{Gutierrez2016CausalIA} which presents a complete overview of uplift modeling with three main models, and the metrics commonly used for evaluation.
    \item \cite{brandfonbrener2020bandit} which exhibits the bandit error, when one only faces bandit type feedback. This work is mainly used for theory.
\end{itemize}

\section{Fundamental problem of uplift modeling}
\label{sec:setting}
\subsection{Potential outcome framework}
We consider in this report the problem based on the Rubin-Neyman potential outcome (PO) framework \cite{rubin1974estimating}, where subject's $i$ response/outcome with and without treatment are denoted respectively by $y_i^{(1)}$ and $y_i^{(0)}$. These variables are known as the potential outcomes. Moreover, in real life experiments, one can only inject user $i$ either a treatment (i.e. $t_i=1$) or no-treatment (i.e. $t_i=0$), and observe the response of the latter injection dose, (i.e. observes only $y_i^{(t_i)}$). The fundamental problem of counterfactual learning is based on the fact that one cannot observe the 2 potential outcomes $\rew_i=(y_i^{(1)},y_i^{(0)})$ for individual $i$. The hypothetical outcome $y_i^{(1-t_i)}$ is the unrealized
“counterfactual outcome” \cite{Bottou2013} and can only be estimated through a similar user $j \neq i$ that did receive the alternative treatment $t_j=1-t_i$. The notion of similarity shall be inferred based on \textit{observed} features (hereafter denoted by "features").

For instance, in medicine, the variable $y_i^{(t_i)}$ could model the fact that "subject $i$ has been cured after receiving the treatment $t_i$" where this boolean random variable (r.v.) is 1 if the patient took the drug, and 0 otherwise. In advertisement, one could model the fact subject $i$ has bought or not a product (boolean outcome variable $y_i^{(t_i)}$), after receiving a display (boolean condition $t_i \in \{ 0,1 \}$).

\subsection{Full information / Observable features}
We denote the characteristics space $\overline{\X} \subset \R^n$. For each user, a characteristic vector $\overline{x} \in \overline{\X}$ and a full feedback reward vector $\rew=(y^{(1)},y^{(0)}) \in \R^2$ are drawn from a joint distribution $(\overline{x},\rew) \sim \mathcal{D}$. We stress that $\rew$ may be dependent on $\overline{x}$ as they are jointly distributed. Specifically user $i$'s behavior is endowed in a $n$-dimensional characteristics vectors $\overline{x}_i \in \overline{\X}$. User \textit{full information} characteristics $\overline{x}_i$ fully describes the exact behavior (i.e. the outcome distribution) of user $i$.

In the logged-bandit feedback, one only "sees" the value of $y^{(t)}$, meaning only half of the full feedback reward vector $\rew$ is revealed, whereas in the "full feedback" problem, the whole reward $\rew$ is revealed. This "full feedback" is unrealistic, but it can be seen as the ideal dataset one could face to learn such treatment causal effect. The PO distribution $\left(y^{(1)}_ i, y^{(0)}_i\right)\sim \mathcal{D}_{\rew|\overline{x}}(\cdot|\overline{x_i})$, with the distribution $\mathcal{D}_{\rew|\overline{x}}$ being the marginal of $\mathcal{D}$.
In particular for each user $i$, we assume there exists a r.v. 
$$\left(y_i^{(1)},y_i^{(0)}\right)=\overrightarrow{y_i}: \Omega \rightarrow \R^2,$$ 
defined on the complete probability space $(\Omega, \mathcal{F}, \mathbb{P})$. This r.v. can be viewed as the underlying  outcome distribution of user $i$, if he/she has been treated or not.

In practice, real life constraints (ability to measure and/or privacy-preserving regulation) limit the amount of accessible information for user $i$; i.e. \textit{full information} characteristics $\overline{x}_i$ is only known through a function $g_{\text{obs.}}$. Specifically one only observes a feature vector $x_i=g_{\text{obs.}}(\overline{x}_i)\in \R^d$, where the function $g_{\text{obs.}}$ is the anonymization mapping into the  \textit{observable} feature space $\X$ we have access to: for instance, part of the browsing history accessible by an real time bidding (RTB) actor. From the recent privacy-preserving regulation, with related Turtledove\footnote{https://github.com/WICG/turtledove} and Sparrow\footnote{https://github.com/WICG/sparrow} proposals, the \textit{observable} features $x_i$ might be the cohort $j \in  [J]:=\{ 1, \dots, J \}$ user $i$ belongs to, i.e. $g_{\text{obs.}}(\overline{x}_i) \in  [J]$.

Typically the dimension of both feature space $\X$ and characteristics space $\overline{\X}$ satisfy $d \ll n$. For example, the \textit{observed features} used by advertisers to process inference (online recommendation system, etc), captures a small set of the user's behavioral attributes such as demographic information or part of its browsing history. 

The key about privacy preserving design is how well the function $g_{\text{obs.}}$ captures the original complete user information $\overline{x}_i$. The complete spectrum of such anonymization function $g_{\text{obs.}}$ goes from the identity mapping, meaning a full leakage of user privacy, to a uniform random assignment, in the cohort setting for instance where user privacy is guaranteed, i.e. $\forall x \in \X$, $g_{\text{obs.}}(\overline{x})\sim \mathcal{U}([J])$. The whole story of privacy preserving design is to find the right trade-off between these two extremes, guaranteeing a minimum of causal effect between $g_{\text{obs.}}(\overline{x})$ and the associated reward vector $\rew$ while ensuring $g_{\text{obs.}}$ preserves the user's privacy.

\begin{remark}
An interesting question is how to estimate the causal effect between the 2 variables $g_{\text{obs.}}(\overline{x})$ and $\rew$ for a given function $g_{\text{obs.}}$ (potentially designed from an adversarial manner), as a correlation would potentially not be sufficient and an intervention would not be possible, at least not directly on the variable $g_{\text{obs.}}(\overline{x})$. One could wonder, assuming there exists some "causal link" between $\overline{x}$ and $\rew$, whether a correlation between $g_{\text{obs.}}(\overline{x})$ and $\rew$ means a causal link, for every function $g_{\text{obs.}}$.
\end{remark}

\begin{remark}
An interesting metric to compute while anonymizing is the \textit{information loss} \cite{Agrawal2001Privacy, Shah2016} that quantifies how much information we lose from the whole vector of characteristics $\overline{x}_i$, while guaranteeing \textit{some} minimal performance for prediction. The story is usually set the other way around. The law imposes some minimal privacy guarantee for each user, and based on such a constraint, companies try to optimize the performance of their predictive algorithms (see Figure \ref{fig:priv+preserving}).
\end{remark}

\begin{figure}[!ht]
\centering
\includegraphics[width=0.7\textwidth]{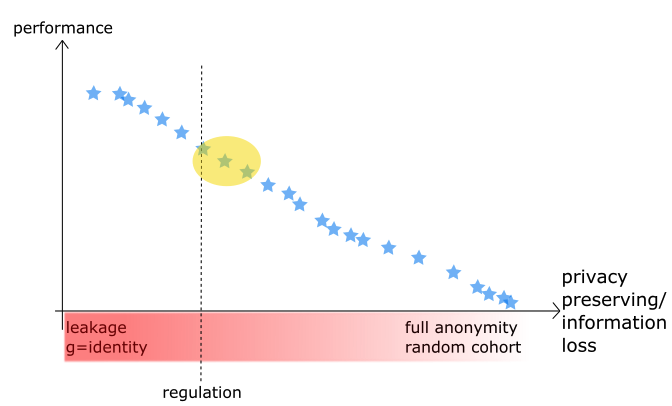}
\caption{Maximizing performance under privacy preserving regulation}
\label{fig:priv+preserving}
\end{figure}

\subsection{Treatment assignment}

On action set $\mathcal{T}$, a treatment assignment policy $\pi: \X \rightarrow \mathcal{P}(\mathcal{T})$ maps each feature (aka "context") $x \in \X$ to a distribution over the action set $\mathcal{T}$, i.e. the "treatment assignment". Specifically, $\pi(\cdot|x)$ is a distribution over the action set $\mathcal{T}$.  In the simple setting with binary treatment $\mathcal{T}=\{ 0,1 \}$ and binary reward $\rew_i \in \{0,1\}^2$, $\pi(\cdot|x)$ is often a Bernoulli distribution that can be summarized by the contextual value $\Pro(t=1|x)$, $\forall x \in \X$. A particular case is when the treatment assignment is stochastic, and \textit{does not depend} on the feature $x$. This is called Randomized Control Trial (RCT). 
\begin{defi}[RCT]
A treatment policy $\pi(\cdot \mid x)$ is said to be RCT, if and only if there exists a distribution $\pi_{RCT}$ over the action set, independent of $x$ and such that $\forall x \in \X$ $\pi(\cdot \mid x)=\pi_{RCT}(\cdot)$. In particular for a binary action set, the r.v. $t$ sampled using $\pi$ is such that $\Pro(t=1|x)=\alpha$, $\forall x \in \X$, and $\alpha$ being a constant.
\end{defi}

Although being sometimes expensive or unethical, RCT is the gold-standard for studying the causal relationship between a treatment and an outcome. Randomization eliminates the bias inherent to observational studies, where surprising phenomena like Simpson's paradox might occur \cite[Chapter 6.3]{PetersJanzingSchlkopfElements}.

\subsection{Logged-bandit versus full feedback}
We define here the two types of dataset we consider hereafter, namely the logged-bandit feedback one, and the full feedback one. 

The full feedback dataset $D_N^{\text{full}}$ is composed by the feature vector and the reward vector. 
\begin{equation*}
    D_N^{\text{full}}=\left\{ x_i, \overrightarrow{y_i} \right\}_{i=1}^N, \quad \overrightarrow{y_i} \in \R^2
\end{equation*}
No policy is required at this point, and any treatment policy could be plugged in afterward.

On the other hand, the logged-bandit $D_N$ (aka observational dataset) of size $N$ is a list of instances:
\begin{equation*}
    D_N=\left\{ x_i, t_i, y_i, q_i \right\}_{i=1}^N, \quad \text{with} \quad q_i=\pi(t_i|x_i)\quad \text{and} \quad y_i=\rew[t_i]_i.
\end{equation*}

\begin{remark}
N.B.: In many observational datasets, particularly in causal inference, we do not have access to the point-wise probability of being treated $q_i$. This is then estimated through a nuisance model, in order to fit the propensity score. This is not part of the present work, where we assume to have access to such $q_i$. 
\end{remark}

Note that the terminology \textit{observational dataset} should not be mismatched with \textit{observational studies}.
While the first covers both RCT and observational studies, the second only covers the process to assign treatment from a non random process, i.e. non-RCT.

\subsection{Underlying distribution}
Following Pearl's formalism \cite{PearlBookWhy}, one can propose an associated Structural Causal Model (SCM) in Figure \ref{figure:SCM}:
\begin{figure}[!ht]
     \centering

\begin{subfigure}{.45\textwidth}
\centering
\begin{tikzpicture}[auto, thick, node distance=1cm, >=triangle 45]
\tikzstyle{unobserved}=[thick, dashed, fill=gray!20]
\tikzstyle{norn}=[thick, fill=gray!20]
\tikzset{
    cross/.pic = {
    \draw[rotate = 45] (-####1,0) -- (####1,0);
    \draw[rotate = 45] (0,-####1) -- (0, ####1);
    }
}
\draw 
    node[norn] at (0,0)[causalvar](T){$t$}
    node[unobserved] at (3,0)[causalvar](Y){$\rew$}
    node[norn] at (1,1.5)[causalvar](X){$x$}
    node[norn] at (1.5,-1)[causalvar](y){$y$}
    node[unobserved] at (2.5,2.5)[causalvar](U){$\overline{x}$};
    \draw[red,->](T) -- node {} (y);
    \draw[->](Y) -- node {} (y);
    \draw[dashed,->](U) -- node[above] {$g_{\text{obs.}}$} (X);
    \draw[dashed,->](U) -- node {} (Y);
    \draw[->](X) -- node {} (T);
\end{tikzpicture}
\caption{\textit{A priori} observational causal graph}
\label{figure:SCM0}
\end{subfigure}
\hfill
\begin{subfigure}{.45\textwidth}
\centering
{\begin{tikzpicture}[auto, thick, node distance=1cm, >=triangle 45]
\tikzstyle{unobserved}=[thick, dashed, fill=gray!20]
\tikzstyle{norn}=[thick, fill=gray!20]
\tikzset{
    cross/.pic = {
    \draw[rotate = 45] (-#1,0) -- (#1,0);
    \draw[rotate = 45] (0,-#1) -- (0, #1);
    }
}
\draw 
    node[norn] at (0,0)[causalvar](T){$t$}
    node[unobserved] at (3,0)[causalvar](Y){$\rew$}
    node[norn] at (1,1.5)[causalvar](X){$x$}
    node[norn] at (1.5,-1)[causalvar](y){$y$}
    node[unobserved] at (2.5,2.5)[causalvar](U){$\overline{x}$};
	\draw[red,->](T) -- node {} (y);
	\draw[->](Y) -- node {} (y);
	\draw[dashed,->](U) -- node[above] {$g_{\text{obs.}}$} (X);
	\draw[dashed,->](U) -- node {} (Y);
	\draw[dotted,->](X) -- node {\text{if RCT}} (T);
	\draw (.6,.9) pic[rotate = 45] {cross=5pt};
\end{tikzpicture}
}
\caption{Resulting SCM when using RCT}
\label{figure:SCM1}        
\end{subfigure}
\caption{Causal graph $\mathcal{G}_{SCM}$ induced by the SCM}
\label{figure:SCM} 
\end{figure}

The arrow from $x$ to $t$ is removed in the case of an RCT experiment. This allows to estimate the causal effect of the treatment, i.e. the red arrow from $t$ to $y$ in Figure \ref{figure:SCM}.

If $t \in \{ 0,1 \}$, we define the expected reward for (full information) characteristics $\overline{x} \in \overline{\X}$, under treatment $t$, by
\begin{equation} \label{eq:true_conv}
    \overline{p_t}(\overline{x})=\mathbb{E}(y | do(t), \overline{x}),
\end{equation}
where the $do(\cdot)$ is formally defined in Pearl's book \cite{PearlBookWhy}.
In particular, if the outcome $y$ is binary, equation \eqref{eq:true_conv} becomes:
\begin{equation}
    \overline{p_t}(\overline{x})=\mathbb{P}(y=1 | do(t), \overline{x}).
\end{equation}
Similarly, one can define the same quantity on observable features $x \in \X$ by:
\begin{equation} 
    {p_t}({x})=\mathbb{E}(y | do(t), {x}),
\end{equation}
and if the outcome is binary:
\begin{equation}
    {p_t}({x})=\mathbb{P}(y=1 | do(t), {x}).
\end{equation}



\subsection{ITE}
The Individual Treatment Effect (ITE) is a r.v. defined by the difference between the two PO:
$$\tau=\left\langle \rew , \begin{bmatrix}
         1 \\
         -1
        \end{bmatrix} \right\rangle=\rew[1]-\rew[0].$$ 
Specifically for subject $i$, the ITE $\tau_i$ is the incremental benefit of being treated, versus not:
\begin{equation*}
    \tau_i:=\rew[1]_i-\rew[0]_i.
\end{equation*}
As explained before, in the real world, one can never observe at the same time both quantities $\rew[1]_i$ and $\rew[0]_i$, and then the ITE $\tau_i$ is not \textit{identifiable}. A generalization trick is based on estimating the so called Conditional Average Treatment Effect (CATE). The CATE is based on the observable features $x$ only, and generalizes around similar $x$ to reconstruct such ITE. Specifically, we define the CATE $\tau(x)$ by:
\begin{equation} \label{eq:ite}
\tau(x):= \E[\tau|x]=\E[\rew[1]-\rew[0]|x]. 
\end{equation}

Similarly, if we consider an RCT assignment, and if the r.v. $y=t \rew[1]+(1-t)\rew[0]$, then the uplift is defined as follows:
\begin{equation} \label{eq:upl}
    \tu(x):=\E[y|t=1, x]-\E[y|t=0, x]=p_1(x)-p_0(x).
\end{equation}

\subsection{Linking the treatment effect heterogeneity and uplift objectives}
\label{sec:ITE=uplift}
Based on \cite{zhang2020unified} we can decompose the CATE as:
\begin{align}
\begin{split}
\tau({x})=& \mathbb{E}[\rew[1]-\rew[0] \mid {x}] \\
=& \mathbb{E}[\rew[1] \mid t=1, {x}] P(t=1)+\mathbb{E}[\rew[1] \mid t=0, {x}] P(t=0) \\
&-\mathbb{E}(\rew[0] \mid t=1, {x}) P(t=1)-\mathbb{E}[\rew[0] \mid t=0, {x}] P(t=0) \\
=& \underbrace{\mathbb{E}[\rew[1] \mid t=1, {x}]-\mathbb{E}[\rew[0] \mid t=0, {x}]}_{\text {observed }} \\
&+P(t=1)\{\underbrace{\mathbb{E}[\rew[0] \mid t=0, {x}]}_{\text {observed }}-\underbrace{\mathbb{E}[\rew[0] \mid t=1, {x}]\}}_{\text {unobserved }}\\
&+P(t=0)\{\underbrace{\mathbb{E}[\rew[1] \mid t=0, {x}]}_{\text {unobserved }}-\underbrace{\mathbb{E}[\rew[1] \mid t=1, {x}]\}}_{\text {observed }}
\end{split}
\end{align}
The last equality splits the CATE into 3 terms, hereafter deeply explained under three common assumptions.

The first term is equal to the uplift term $\tu(x)$ defined in equation \eqref{eq:upl}, because the PO equals the observing outcome when conditioning on $t$. The second and third terms are null under the following unconfoundedness.
\begin{ass}[Unconfoundedness, aka strong ignorability]
\label{ass:unconfoundedness}
The potential outcome couple is independent of the treatment $t$, when conditioning on the observed co-variate variables $x$, i.e.
\begin{equation}
\left\{y^{(0)}, y^{(1)}\right\} \perp t \mid x
\end{equation}
\end{ass}
Under Assumption \ref{ass:unconfoundedness}, one immediately gets:
\[ \E[y^{(0)} \mid t=0, x]=\E[y^{(0)} \mid x], \]
leading to:
\[ \E[y^{(0)} \mid t=0, x]-\E[y^{(0)} \mid t=1, x]=\E[y^{(0)} \mid x]-\E[y^{(0)} \mid x]=0. \]
The estimation of the uplift term \eqref{eq:upl} only involves observational data without any counterfactual. It can be estimated with no bias guaranteed, under the 2 following conditions: overlapping and SUTVA.

\begin{ass}[Overlapping treatment]
\label{ass:overlap}
Any subject $i$ has a non-zero probability of receiving treatment and control, i.e. for every $x \in \R^d$,
\begin{equation}
0<\Pro(t=1\mid x)<1
\end{equation}
\end{ass}
\begin{remark}
In assumption \ref{ass:overlap}, we somehow extend the RCT setting, as the propensity score $\Pro(t=1\mid x)$ may depend on $x$. Assumption \ref{ass:overlap} remains true in RCT experiments if $\epsilon \leq \pi_{\text{RCT}}(\cdot) \leq 1-\epsilon$ for some $\epsilon >0$.
\end{remark} 

\begin{ass}[Stable Unit Treatment Values Assumption]
\label{ass:sutva}
A subject's potential outcome is not affected by other subjects treatment. In other words, treatment applied to one subject does not affect the outcome of other subjects.
\end{ass}

As explained in \cite{zhang2020unified}, Assumption \ref{ass:sutva} often applies in health related application. But, in online advertising, because of the wide existence of crowd-sourced coupon sharing websites, this assumption may fail. Indeed not treating a user does not guarantee that this user will effectively not be treated by a competitor.

From a same perspective, Assumption \ref{ass:overlap} requires to maintain a minimum exploration level for every feature. It might be the case for low traffic/short period experiments, but certainly not from a production machinery in advertising for instance.

\begin{remark}
In many papers about uplift modeling, it is implicitly assumed that the data comes from RCT experiments or A/B tests \cite{Gutierrez2016CausalIA}. In particular, RCT implies the equivalence between conditioning and intervening \cite{PearlBookWhy}, i.e. $t=1 \Leftrightarrow \text{do}(t=1)$, because $T$ has no parents in the associated SCM, and $\emptyset$ is a valid adjustment set for $(T,Y)$ \cite{PetersJanzingSchlkopfElements}.
\end{remark}

\begin{remark}
Assumption \ref{ass:unconfoundedness} is not testable from dataset $D_N$ only. In short, we want to check if the projected observable feature information $x$ is enough in order to fully explain the heterogeneity in treatment, and ultimately, to build uplift models. An easy answer has been proposed in \cite{li2020general} and requires that $x$ contains all direct causes of the outcome $y$ and contains no effect variables of $y$, which is usually not verified in advertising, considering a sequential timeline: as the available feature is very limited ($d \ll n$), the variable $y_t$ may influence future feature $x_{t+1}$. Assumption \ref{ass:unconfoundedness} fails if this feedback loop phenomenon happens.
\end{remark}

\subsection{Modeling uplift in practice}
Literature has increased fast about building more and more precise models in order to fit at best $\tau$. This is beyond the scope of this tech report, but a recent survey \cite{zhang2020unified} clearly explains all subtleties of each standard method.
A general model fitting the CATE $x \mapsto \tau(x)$ of Uplift $x \mapsto \tu(x)$ will be hereafter denoted by $\tauhat$ and $\uh$.




\section{Metrics for evaluation}
\label{sec:metrics}
\subsection{Metrics with known ground truth}
For synthetic or semi-synthetic datasets, as we know the generating model for $y \mid t,x$, we can use the ground truth to evaluate the learned model. The most popular metric to do so is the Precision in Estimation of
Heterogeneous Effects (PEHE) (Hill, 2011), which computes the MSE between the estimated CATE $\tauhat$ and the exact one using the ground truth $\tau$. Formally, PEHE is defined as
\[ \text{PEHE}=\frac{1}{N}\sum_{i=1}^{N} \left(\tauhat(x_i)-\tau(x_i) \right)^2\]

\subsection{Metrics without ground truth}
\subsubsection{Qini and Uplift curves}
As recall in \cite{devriendt2020learning}, when the true CATE $\tau$ is not known, the usual principle of uplift modeling is to find the best way of ranking instances by their respective uplift values $\tauhat$. Specifically, \cite{Radcliffe2012RealWorldUM} suggests to visualize the so called \textit{incremental expected uplift value} for an incrementally larger subgroup of the ranked population. For example, for the top 10\%, 20\%, ..., 100\% of the instances.

\begin{figure}[!ht]
\centering
\includegraphics[width=0.5\textwidth]{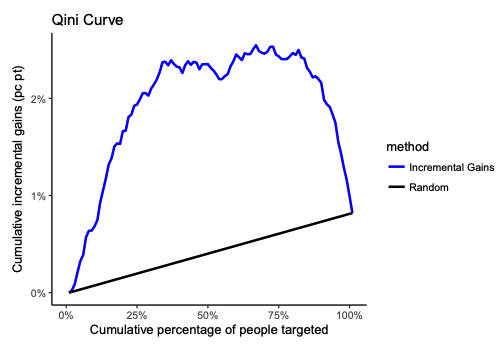}
\caption{Incremental gain curve (blue) versus the expected theoretical gain using random targeting (black)}
\label{fig:AUUC_ex}
\end{figure}

Figure \ref{fig:AUUC_ex} shows such an \textit{incremental expected uplift value}: the blue line represents the cumulative incremental gains as a function of the selected fraction of the ranked population, while the black line represents the expected value of a random sub-sample of that size, called the random baseline. We expect a good uplift model to rank first the individuals likely to respond when treated, leading to higher estimated uplift values in the early parts of the plot. Different approaches have been proposed in the literature to do so, the two best known being the Qini Curve and the Uplift Curve. We stress that there is a further difference
between the curves themselves: which values are plotted on the $y$-axis and $x$-axis, and which area under the curve is returned (with or without subtracting the random line).

As noticed by the authors of \cite{devriendt2020learning}, we cannot find unique definitions for Qini and Uplift curves in the broad literature. Nevertheless, we can split all these definitions into 2 categories:
\begin{itemize}
    \item \textit{separated} versus \textit{joint} ranking: is the ranking computed on treatment and control groups \textit{separately} or as one \textit{joint} group?
    \item \textit{absolute} versus \textit{relative} incremental gains: are the incremental gains computed as absolute values or as values relative to the number of individuals?
\end{itemize}

The second difference (\textit{absolute} versus \textit{relative}) is actually related to the special care required to compute uplift metrics when the dataset is unbalanced (i.e. when the treatment/control split ratio is different than 50\%-50\%). They require a re-balancing that can be \textit{partially} (this re-balancing along the $y$-axis is developed in Section \ref{sec:defvariants}) implemented with relative incremental gains or with absolute incremental gains associated with re-scaling factors.

We want to highlight here a third difference among variants in literature, that is still related to the re-balancing topic, but is orthogonal to the second difference (this third difference is not introduced as such in \cite{devriendt2020learning}, but presented through examples): the re-balancing can be implemented with global factors (based on the sizes of the complete treatment and control groups) or with current estimated factors (based on the sizes of the incremental treatment and control subgroups defined by the first $k$ elements).

\subsubsection{Definition of some variants}
\label{sec:defvariants}
Let $\uh$ be a general uplift model thought of as an approximation of the true uplift $\tu$.

To formalize the \textit{joint} variants, we first denote by $\Upsilon(D_N, k)$ the decreasing order of the dataset $D_N$ under $\uh(\cdot)$. Specifically $\Upsilon(D_N,k)$ represents the first $k$ instances in $D_N$ according to the ordering induced through the score $\uh(\cdot)$, i.e.
\begin{align} 
\Upsilon(D_N, k) &=\left\{\left(x_{i}, t_i, y_{i}\right) \in D_N\right\}_{i=1, \ldots, k} \text { such that } \forall i \leq k, \forall k<l,  \, \uh\left(x_{i}\right) \geq \uh\left(x_{l}\right)
\end{align}
We define the permutation of $[N]$ $\phi \in \mathfrak{S}_N$ that sorts the dataset instances by \textit{decreasing} uplift model value $\uh(x_i)$ (with ties arbitrarily split), i.e. let $\phi \in \mathfrak{S}_N$ s.t.
\begin{equation}
\label{eq:perm}
i<j \Rightarrow \uh(x_{\phi(i)}) \geq \uh(x_{\phi(j)}).    
\end{equation}

Among these top-$k$ ranked individuals, we define
\begin{align}
N_{\Upsilon}^{T}(D_N, k)&=\sum_{\left(x_{i}, t_i, y_{i}\right) \in \Upsilon(D_N, k)}\one_{t_{i}=1}\\
N_{\Upsilon}^{C}(D_N, k)&=\sum_{\left(x_{i}, t_i, y_{i}\right) \in \Upsilon(D_N, k)}\one_{t_{i}=0}
\end{align}
and the number of responders among the top-$k$ individuals are defined as:
\begin{align}
R_{\Upsilon}^{T}(D_N, k)&=\sum_{\left(x_{i}, t_i, y_i\right) \in \Upsilon(D_N, k)}\one_{y_i=1}\one_{t_i=1}\\
R_{\Upsilon}^{C}(D_N, k)&=\sum_{\left(x_{i}, t_i, y_i\right) \in \Upsilon(D_N, k)}\one_{y_i=1}\one_{t_i=0}
\end{align}

To formalize the \textit{separated} variants, we consider the treatment group $T=\{(x, t, y) \in D_N \mid t=1\}$ and the control group $C=\{(x, t, y) \in D_N \mid t=0\}$, and we denote by $\Upsilon(T, k)$ (resp. $\Upsilon(C, k)$) the decreasing order of the treatment group (resp. control group) under $\uh(\cdot)$. We define the number of responders among the top-k individuals on each group as:
\begin{align}
R_{\Upsilon}(T, k)&=\sum_{\left(x_{i}, y_{i}\right) \in \Upsilon(T, k)}\one_{y_i=1}\\
R_{\Upsilon}(C, k)&=\sum_{\left(x_{i}, y_{i}\right) \in \Upsilon(C, k)}\one_{y_i=1}
\end{align}

The Qini and Uplift curves $V(\cdot)$ can now be defined as a function of $k \in \{1...N\}$ (\textit{joint} setting) or $p \in [0, 1]$ (\textit{separated} setting), where $p$ is the proportion for both treatment and control groups (e.g. 1\%, 2\%, 3\%, ...). Table \ref{table:curvedefs} from \cite{devriendt2020learning} lists several formulas for $V(\cdot)$ found in literature.

\begin{centering}
\begin{table}[!ht]
\begin{center}
\makebox[\textwidth]{
\begin{tabular}{ |c|c|c|c| } 
 \hline
 Rank & Count & Qini Curve & Uplift Curve \\ 
  \hline
    \multirow{2}{2em}{Sep.} & Abs. & $V(p)=R_{\Upsilon}(T, p|T|)-R_{\Upsilon}(C, p|C|) \frac{|T|}{|C|}$ & $V(p)=R_{\Upsilon}(T, p|T|)-R_{\Upsilon}(C, p|C|)$ \\ 
 \cline{2-4}
  & Rel. & & $V(p)=\frac{R_{\Upsilon}(T, p|T|)}{|T|}-\frac{R_{\Upsilon}(C, p|C|)}{|C|}$ \\ 
 \hline
 \multirow{2}{2em}{Joint.} & Abs. & $V(k)=R_{\Upsilon}^{T}(D_N, k)-R_{\Upsilon}^{C}(D_N, k) \frac{N_{\Upsilon}^{T}(D_N, k)}{N_{\Upsilon}^{C}(D_N, k)}$ & $V(k)=\left(\frac{R_{\Upsilon}^{T}(D_N, k)}{N_{\Upsilon}^{T}(D_N, k)}-\frac{R_{\Upsilon}^{C}(D_N, k)}{N_{\Upsilon}^{C}(D_N, k)}\right) \left(N_{\Upsilon}^{T}(D_N, k)+N_{\Upsilon}^{C}(D_N, k)\right)$ \\ 
 \cline{2-4}
  & Rel. & \multicolumn{2}{c}{$V(k)=\frac{R_{\Upsilon}^{T}(D_N, k)}{|T|}-\frac{R_{\Upsilon}^{C}(D_N, k)}{|C|}$} \\ 
 \hline
\end{tabular}}
\caption{
Evaluation measures for uplift modeling. Two main approaches are considered, the Qini Curve and the Uplift Curve, both over two dimensions: ranking the data separately per group or jointly over all data, and expressing the volumes in absolute or relative numbers. From \cite{devriendt2020learning}}.
\label{table:curvedefs}
\end{center}
\end{table}
\end{centering}

Here, we want to introduce a new joint/absolute case variant for the uplift curve, including a local Importance Sampling (IPS) \cite{Cochran77, mcbook}
correction:
\begin{equation} \label{eq:local_avg}
V_{\text{IPS}}(k)=\frac{R_{\Upsilon}^{T}(D_N, k)}{e_T(k)}-\frac{R_{\Upsilon}^{C}(D_N, k)}{1-e_T(k)},    
\end{equation} 
where $e_T(k)$ is a local averaging of the probability to be treated, around the $k$-th entry. We could compute it based on a local kernel averaging, by $e_T(k)=\sum_{k'\in \mathrm{Z}} \text{ker}(k-k')\one_{t_k'=1}$, with a generic invariant kernel function\footnote{in particular, a kernel function is normalized, i.e. $\sum_{k'\in \mathrm{Z}} \text{ker}(k')$=1.} $x \mapsto \text{ker}(x)$. 

Another approach is by using the standard IPS formula, considering $\pi$ being the gold standard RCT 50\% treatment policy, and if $\pi_0$ denotes the logging policy. One can write: \begin{equation}
\widehat{V}_{\text{IPS}}(k)=\E_{\pi_0}\left[{\frac{\pi}{\pi_0}v_k}\right]    
\end{equation} 
where $v_k=\frac{1}{N} \sum_{i=0}^k \mathbbm{1}\left(t_{\phi(i)}=1 \wedge y_{\phi(i)}=1\right)-\mathbbm{1}\left(t_{\phi(i)}=0 \wedge y_{\phi(i)}=1\right)$ with $\phi$ defined in \eqref{eq:perm}.

Note that in Table \ref{table:curvedefs}, only the Uplift/Sep./Rel. formula does not actually re-balance the treatment and control populations. The diversity of all the other formulas should not hide their similarity.

Imagine a theoretical dataset generated by a perfect RCT process. In such an unrealistic dataset, whatever the uplift interval you select, the proportion of treatment (resp. control) individuals within this interval is equal to the total proportion of treatment (resp. control) individuals, i.e.
$$\forall \, p \in [0, 1] \quad \Upsilon(D_N, k) = \Upsilon(T, p|T|) \cup \Upsilon(C, p|C|) \quad \text{with} \quad k = p(|T| + |C|)$$

In particular, this implies that: $N_{\Upsilon}^T(D_N, k) = p|T|$, $N_{\Upsilon}^C(D_N, k) = p|C|$, $R_{\Upsilon}^T(D_N, k) = R_{\Upsilon}(T, p|T|)$, $R_{\Upsilon}^C(D_N, k) = R_{\Upsilon}(T, p|C|)$.

With this setup, all $V(p)$ and $V(k)$ variants (except Uplift/Sep./Rel.) presented in Table \ref{table:curvedefs} are proportional with respect to a constant that depends on the re-balancing method.

\subsubsection{Re-balancing along the two axes}
\label{sec:normtwoaxes}
All the previously defined $V$ formulas (except Uplift/Sep./Rel.) address the re-balancing along the $y$-axis. But when calculating the area under the Qini/Uplift curve we also have to re-balance along the $x$-axis, in particular in the \textit{joint} setting. This is not clearly stated in \cite{devriendt2020learning} that focuses on the integration of curves in the \textit{separated} setting. Indeed, integrating with $p$ over $[0, 1]$ implicitly re-balances along the $x$-axis because the width of any interval $[0, p]$ does not depend on the actual proportion of treatment and control individuals it represents.
In the \textit{joint} setting, one may want to compute the AUUC by $\sum_{k=1}^N{V(k)}$. In the case of an unbalanced dataset, this formula underestimates the contribution of the minority group, even if $V(k)$ implements a re-balancing along the $y$-axis. We address this case in section \ref{section:unbalanced_dataset}.

\subsection{In practice, how to select the best Uplift model?}
\subsubsection{Pragmatic criteria to select an uplift model}
We state here the 6 criteria by Radcliffe in his pioneer work \cite{Radcliffe2012RealWorldUM} when one wants to choose the "best" uplift model among several models learned on the same dataset:

\begin{itemize}
\item "Validated qini": the highest qini/AUUC value.

\item "Monotonicity of incremental gains": intuitively, after sorting the individuals by decreasing estimated uplift score, each segment $[p, p+\delta]$ with $p \in [0,1]$ should have an average slope "smaller" than the previous one, producing a concave-like uplift curve.

\item "Maximum impact": how much extra positive outcomes the model predicts at its peak.

\item "Impact at cutoff": the cumulative uplift at a given percentile threshold.

\item "Tight validation": is the estimated curve on validation set similar to the estimated curve on the training dataset?

\item "Range of predictions.": models with a larger prediction range are supposed to be more useful.
\end{itemize}

\subsubsection{Practical questions}
This raises the question of how we can choose among uplift models if their AUUC/Qini is equal but their uplift curves have different shapes. It depends on the quantity of interest (QoI) and the budget constraints we face. Usually the optimization problem writes:
\begin{equation}
    \max_{a\leq \text{constr}  \leq b} \text{QoI}
\end{equation}
\noindent with 
\begin{itemize}
    \item $\text{QoI}  \in \left\{ \text{incremental user}, \text{incremental gain}, \text{RoI}, \dots  \right\}$,
    \item $\text{constr}  \in \left\{ \text{RoI},  \text{\# targeted users} \approx \text{budget}, \dots \right\}$
\end{itemize} 
with $ \text{RoI}=\frac{\text{\#incremental user }}{\text{\#treated user}} \approx $ slopes  on Figure \ref{fig:sameAUUC_detail}.

Figures \ref{fig:sameAUUC} instantiate such a problem, designing three uplift curves such that their AUUC are strictly equal. The blue curve captures the maximum incremental gain, i.e. will be preferred in a low cost setting Figure \ref{fig:QoI1}, in particular, when we do not face any minimum RoI constraints. On the other hand the red curve has the best RoI at its peak (maximum incremental gain meaning optimal ratio treated) leading to the cheapest cost for each conversion (the cost being the inverse of the RoI, that reads by the slope between (0,0) and this point at the maximum of the uplift curve as shown on Figure \ref{fig:QoI2}). This uplift model is the \textit{best} if we are able to pay this RoI. This would happen when we want to \textit{maximize the RoI}, without constraints on the budget. Finally we face a very high RoI constraint, one may prefer the purple curve (first section of the uplift curve has the highest slope as shown on Figure \ref{fig:QoI3}). We may prefer switching to the red curve when progressively allowing lower RoI. If we continue to lower the minimum RoI bound, we end up choosing the blue curve as shown on Figure \ref{fig:QoI4}. All these possibilities cannot be summarized in the single AUUC metric, as this particular metric cannot distinguish between the latter production QoI and constraints (same AUUC). 
\begin{remark}
The previous examples consider concave curves only. The concavity of such uplift curves is fundamental, and in the estimation process one could try to draw the concave envelope of the realized uplift curves.
\end{remark}

\begin{figure}[!ht]
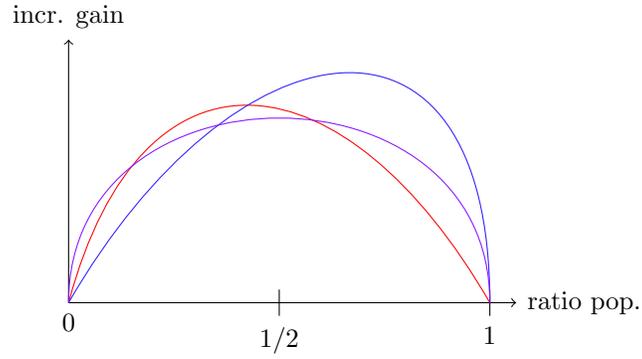

\ctikzfig{fig4}
\caption{Three uplift models with exact same AUUC}
\label{fig:sameAUUC}
\end{figure}

\begin{figure}[!ht]
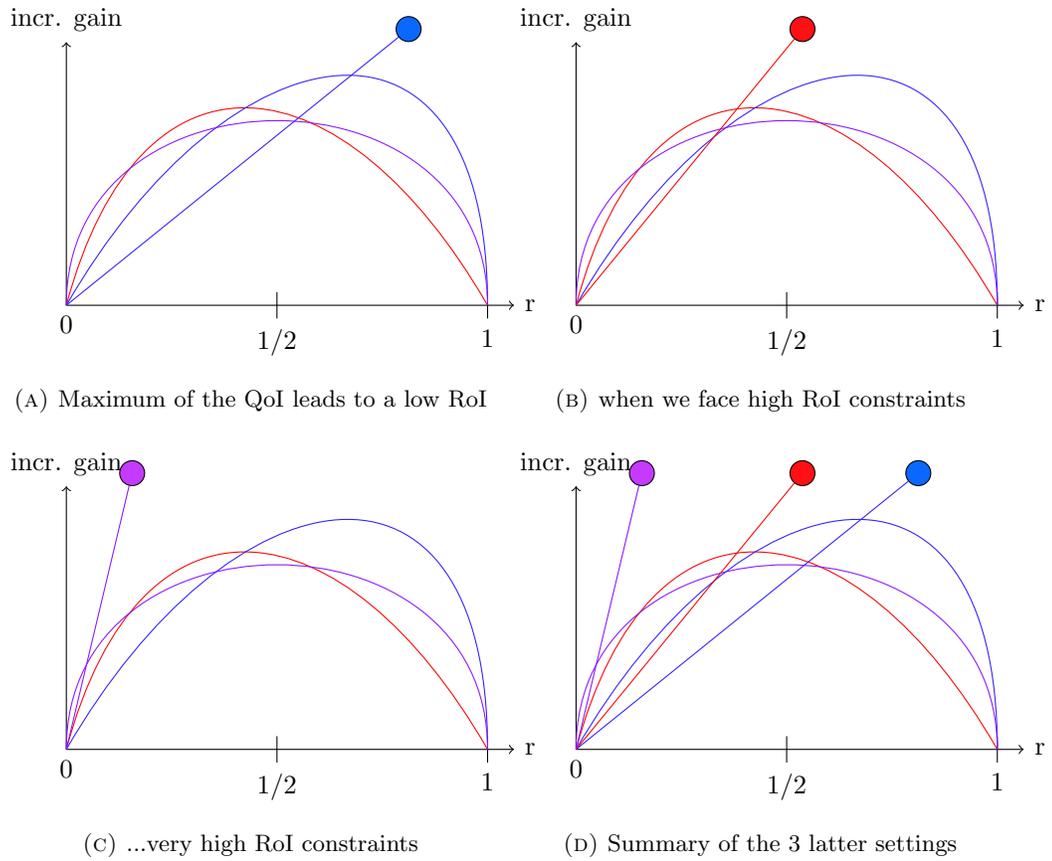

\centering
\begin{subfigure}[b]{0.49\textwidth}
\ctikzfig{fig41}
\caption{Maximum of the QoI leads to a low RoI}
\label{fig:QoI1}
\end{subfigure}
\begin{subfigure}[b]{0.49\textwidth}
\ctikzfig{fig42}
\caption{when we face high RoI constraints}
\label{fig:QoI2}
\end{subfigure}
\begin{subfigure}[b]{0.49\textwidth}
\ctikzfig{fig43}
\caption{...very high RoI constraints}
\label{fig:QoI3}
\end{subfigure}
\begin{subfigure}[b]{0.49\textwidth}
\ctikzfig{fig4all}
\caption{Summary of the 3 latter settings}
\label{fig:QoI4}
\end{subfigure}
\caption{Practical pathological uplift curves with same AUUC/Qini value}
\label{fig:sameAUUC_detail}
\end{figure}

\subsection{Theory behind the uplift curve and AUUC metric}
\label{section:theoryauuc}

\begin{defi}[`transferred' measure]
Given a $d$-dimensional valued r.v. $x \in \X \subset \R^d$ (user features) with associated probability measure $\Pro$, and an uplift model $\uh: \X \rightarrow \R$, we define the `transferred' measure $\Pro_2$ on $\R$ such that (see Figure \ref{fig:measure_map}) for any $\Pro$-measurable set $A \subset \R^d$
$$\Pro_2(A):=\uh_{\#}{\Pro}(A)=\Pro(\left(\uh\right) ^{-1}(A)).$$
\end{defi}

For example, for $\xi>0$, $\Pro_2([\xi, +\infty[)$ represents the probability that the model $\uh$ maps $x$ to a value higher than $\xi$.


\begin{figure}[!ht]
\centering

\ctikzfig{figDistr}
\caption{Transfer of the probability measure $(\X,\Pro)$ onto $(\R, \Pro_2)$}
\label{fig:measure_map}
\end{figure}

\begin{defi}[uplift curve]
\label{def:up_curve}
For an uplift model $\uh$ we define the Normalized Uplift Curve $\VN: [0,1] \rightarrow \R$ as:
$$\VN(r \in [0,1])= \int_\X \tau(x) \one_{[\uh(x)\geq \xi(r)]} \D\Pro(x) \quad \text{where} \quad
\xi(r) = \inf \{\xi \in \R: \Pro(x : \uh(x)\geq \xi)\leq r\}
.$$

Equivalently, we can define
$\xi(r) =\inf \{\xi \in \R: \Pro_2([\xi,+\infty[)\leq r\}$.
\end{defi}

Intuitively, $\xi(r)$ corresponds to the threshold such that a proportion $r$ of users have an uplift $\geq \xi(r)$.


We define the Area Under the Normalized Uplift Curve ($\AUNUC$) as:
\begin{equation}\label{auuc_def}
\AUNUC(p \in [0,1])=\int_{[0,p]} \VN(r) \D r
\end{equation}

For a given dataset $D_N$, $\AUUC$ differs from $\AUNUC$ only by a constant factor that depends on the dataset size (and the re-balancing method chosen for $V(p)$ to manage unbalanced datasets) but not on the evaluated uplift models themselves. Thus comparing uplift models with $\AUNUC$ is equivalent to comparing them with $\AUUC$, as long as the dataset is the same.

\subsection{Computing the AUNUC empirically}
\label{sec:aununc-emp}
Here we recall the way of approximating the AUUC, in the case where we face a logged-bandit dataset, i.e. we do not access the full feedback dataset, but only $\rew^{(t_i)}$ is given as reward. The steps are threefold:
\begin{itemize}
\item Given a logged-bandit feedback dataset $D_N=\left\{ x_i, t_i, y_i, q_i \right\}_{i=1}^N$, with $q_i=\pi(t_i|x_i)$ and $y_i=\rew[t_i]_i$, we argsort through the permutation $\phi \in \mathfrak{S}_N$ by \textit{decreasing} uplift model value $\uh(x_i)$ (with ties arbitrarily split), i.e. let $\phi \in \mathfrak{S}_N$ s.t.
$$i<j \Rightarrow \uh(x_{\phi(i)}) \geq \uh(x_{\phi(j)}).$$
\item Then, we convert the sorted units $i \in [N]$ into a piece-wise linear function globally continuous, following the rule: beginning from (0, 0), we go one unit $1/N$ right and
\begin{enumerate}
    \item we go $1/N$ up iff $t_{\phi(i)}=1 \text{ and } y_{\phi(i)}=1$ [green segments on Figure \ref{fig:build}], 
    \item we go $1/N$ down iff $t_{\phi(i)}=0 \text{ and } y_{\phi(i)}=1$ [red segments on Figure \ref{fig:build}], 
    \item we stay flat  otherwise [horizontal segments on Figure \ref{fig:build}]
\end{enumerate}
From a more rigorous mathematical standpoint, one can define the following quantity:
$$\widehat{V}(k):=\frac{1}{N}\sum_{i=1}^k \mathbbm{1}\left(t_{\phi(i)}=1 \wedge y_{\phi(i)}=1\right)-\mathbbm{1}\left(t_{\phi(i)}=0 \wedge y_{\phi(i)}=1\right)$$
and join each pair of successive points $\left(\frac{k}{N},\widehat{V}(k)\right)$, $\left(\frac{k+1}{N},\widehat{V}(k+1)\right)$ for $k \in [N-1]$. This produces the uplift curve.
\item Then, we join $(0,0)$ with the arrival point that represents a uniformly random treatment policy on the whole population (assigning i.i.d. random uplift values for each instance) [{\color{black} black semi dashed line on Figure \ref{fig:build}}]
\item Finally we compute the area between the constructed uplift curve and the latter described straight line. This is called the $\Delta$AUNUC. The area between the curves specifies the quality of the uplift model on the considered population.
\end{itemize}

\begin{figure}[!ht]
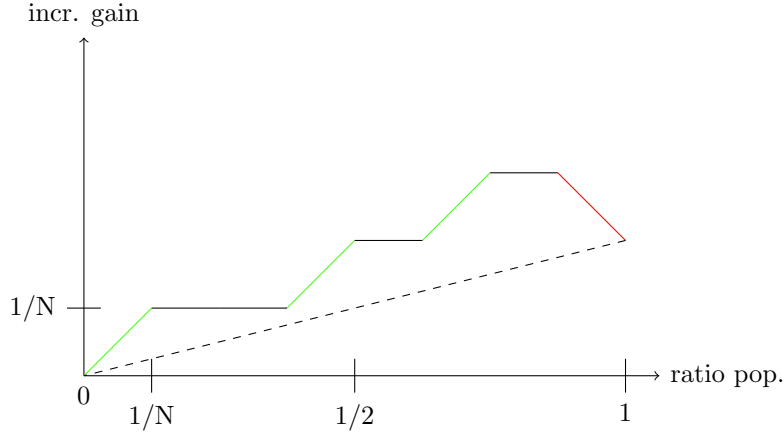

\centering
\ctikzfig{fig5}
\caption{Building AUUC}
\label{fig:build}
\end{figure}


\section{Toy examples on AU(N)UC fails}
\label{sec:counter}
\subsection{What does non-RCT means?}
First, let us recall the definition of RCT:\\
RCT: $ \exists\, \alpha \in ]0, 1[$, s.t. $\forall x \in \X$, $\Pro(T=1 \mid x)=\alpha$\\
Then the negation of the latter definition becomes:\\
non-RCT: $ \exists \, x_1,x_2 
\in \X$, $\Pro(T=1 \mid x_1)\neq\Pro(T=1 \mid x_2)$\\
The generalization of non RCT setting, based on neighborhoods, assuming that the function $x \mapsto \Pro(T=1 \mid x)$ is continuous writes:\\
non-RCT on neighborhood: $ \exists \, \X_1, \X_2 \subset \X \text{ with } \X_1 \cap \X_2 = \emptyset$, s.t. $\Pro(T=1 \mid \X_1)\neq\Pro(T=1 \mid \X_2)$

\subsection{Toy example 1: AUUC on an observational study}
\label{sec:toyexample1}
Let us build a toy example such that the AUUC metric does not properly rank a perfect model and a non perfect model, i.e. gives the highest score to the latter.

More particularly, let us imagine facing 4 groups of individuals/customers with the same size, i.e. each of them containing $N/4$ samples from a dataset of size $N$. The 4 particular groups reflect typical users \cite{beji2020estimating} and are named convincible (CO), Sure Thing (ST), Lost Cause (LC) and Sleeping Dogs (SD) such that: 
\begin{itemize}
    \item CO are users that give a positive output, if and only if they are treated. They symbolise where benefit of the treatment occurs. Specifically, $y^{1}=1$ and $y^{0}=0$.
    \item ST are users that always have positive outcome, whatever they are treated or not, i.e. with $y^{1}=1$ and $y^{0}=1$.
    \item LC are users that always have null outcome, whatever they are treated or not, i.e. with $y^{1}=0$ and $y^{0}=0$.
    \item SD are users with negative reaction w.r.t. treatment $t$, i.e. they have a positive outcome iif they are not treated. They symbolise the population where we aim at applying no treatment. Specifically, they are represented with the 2 PO's $y^{1}=0$ and $y^{0}=1$.
\end{itemize}

We consider noiseless rewards a.k.a. potential outcomes.

We present in Table \ref{counterexample:table} the population of each group, as well as $y^{1}$, $y^{0}$ (P.O.), $u$ being the true uplift, and another approximated uplift model $\uh$ that we will tune later through its constant values $\alpha_i$ across each group. We present, as well, the probability to be treated in each group, $\Pro(t=1|x)$, that is uniform within each group but does not have to be the same from one group to another (a particular case of non-RCT relaxed).

\begin{table}[!ht]
\begin{center}
\begin{tabular}{ | c || c| c | c | c | } 
\hline
groups $(x)$ & CO & ST & LC & SD \\ 
\hline \hline
pop. & $N/4$ & $N/4$ & $N/4$ & $N/4$ \\ 
\hline
$\Pro(t=1|x)$ & $q_1$ & $q_2$  & $q_3$ & $q_4$\\ 
\hline
$y^{(1)}_i${\color{orange} /uplift curve rule} & 1 {\color{orange} /+1} & 1{\color{orange} /+1}  & 0 {\color{orange} /0} & 0 {\color{orange} /0}  \\ 
\hline
$y^{(0)}_i${\color{orange} /uplift curve rule} & 0{\color{orange} /0} & 1{\color{orange} /-1}  & 0 {\color{orange} /0} & 1 {\color{orange} /-1} \\ 
\hline
$\tau_i=u$ & 1 & 0  & 0 & -1\\ 
\hline
$\uh_n(x)$ & $\alpha_1$ & \multicolumn{2}{|c|}{$\alpha_{2}=\alpha_{3}$} & $\alpha_{4}$\\ 
\hline
$\uh_d(x)$ & $\alpha_1$ & $\alpha_2$  & $\alpha_3$ & $\alpha_4$\\ 
\hline
\end{tabular}
\caption{\label{counterexample:table}Group characteristics, with the {\color{orange} uplift curve rule in orange}.
+1 means that when we sample one $x_i$ from this group, and if the latter line is sampled from the treatment variable $t_i$, then the rule makes the uplift curve going up by one for this unit $i$ added on the $x$-axis. (see uplift curve construction in Section \ref{sec:aununc-emp})}
\end{center}
\end{table}

Although the perfect Uplift model $u$ does not distinguish the 2 groups LC and ST (their ITE is the same), we consider a class of models $\mathcal{U}_d$ that have capacity to distinguish the 2 groups, as they are based on the features $x \in \{ CO, ST, LC, SD \}$. We stress that actually the perfect Uplift model $u$ belongs to $\mathcal{U}_d$, but does not have to use its capacity to distinguish ST and LC. 


We will denote hereafter by $\uh_n$ an uplift model that can\textit{n}ot distinguish ST and LC, and by $\uh_d$ an uplift model that can \textit{d}istinguish them ("\textit{n}" stands for \textit{n}ot, while "\textit{d}" stands for \textit{d}istinguishable). We compute now the slope of the uplift curve in each group. The AUUC of an uplift model depends on those slopes and how the model orders the 4 groups. 
Under $\uh_d$ and $\uh_n$, the 4 groups are \textit{a priori} not perfectly ordered. This will be determined later through the values $\alpha_i$.



\begin{table}[!ht]
\begin{center}
\begin{tabular}{ | c || c| c | c | c | } 
\hline
{\color{orange}uplift curve} & CO & ST & LC & SD \\ 
\hline \hline
$\text{slope}(u)$ & $q_1$ & \multicolumn{2}{|c|}{$q_2-1/2$} & $q_4-1$\\ 
\hline
$\text{slope}(\uh_n)$ & $q_1$ & \multicolumn{2}{|c|}{$q_2-1/2$} & $q_4-1$\\ 
\hline
$\text{slope}(\uh_d)$ & $q_1$ & $2q_2-1$ & 0 & $q_4-1$\\ 
\hline
\end{tabular}
\caption{The slopes of the empirical uplift curve.}
\label{table:V1_slope_comp}
\end{center}
\end{table}
Without loss of generality, Table \ref{table:V1_slope_comp} computes the slopes within each groups of the empirical uplift curve. These slopes do not depend on the values of $\alpha_i$'s, but only on the treatment proportions $q_i$'s.

Suppose that an uplift model $\uh$ has values $\alpha_1=0$, $\alpha_2=\alpha_3=1$ and $\alpha_4=-1$ 
with associated treatment proportions $q_1=1/4$, $q_2=5/6$, $q_3=5/12$ and $q_4=1/2$ (notice that the overall treatment ratio is 0.5). This model ranks first the ST+LC groups, then the CO group, and finally the SD group. If we compute the slopes as presented in Table \ref{table:V1_slope_comp}, one has $q_2-1/2=1/3>q_1=1/4$, which breaks the concavity of the uplift curve, associated to the exact model $\tu$. The uplift curves leading to that contradiction are displayed on Figure \ref{fig:AUUC_counter}. The AUUC of the perfect uplift model (blue uplift curve) is strictly smaller than the approximated uplift model $\uh_n$ (green curve), that is absurd because of the optimality of the perfect uplift model $\tu$.


\begin{figure}[!ht]
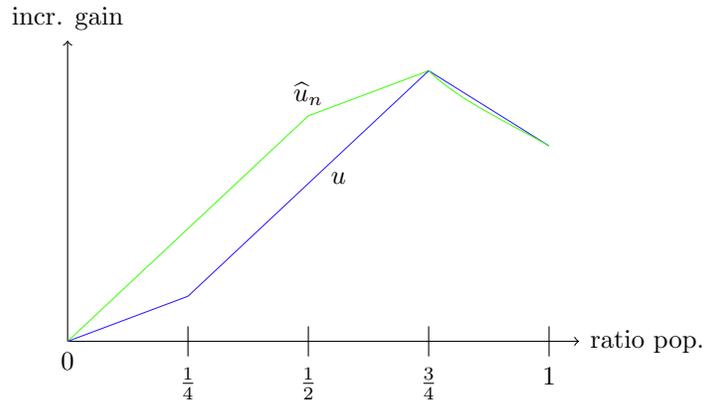

\centering
\ctikzfig{fig2}
\caption{Example of non-concavity for $\tu$. Its AUUC is not optimal. A less good uplift model, such as $\uh_n$ could have a strictly higher AUUC.}
\label{fig:AUUC_counter}
\end{figure}

\begin{center}
\begin{tcolorbox}[width=.75\linewidth, halign=center, colframe=black, colback=blue!30, boxsep=1mm, arc=3mm]
One should only use AUUC for RCT, but never with data from observational studies. More precisely, the treatment rate needs to be independent on the features $x \in \X$.
\end{tcolorbox}
\end{center}
\begin{remark}
Note that this counter-example could actually be solved by using the IPS formula as introduced in \eqref{eq:local_avg}.
\end{remark}

Now we showed that using AUUC requires to have a \textit{uniform} propensity score $$\Pro(t=1|x)=q_0, \quad \forall x \in \X,$$ one can ask whether the dataset should be balanced between treatment and control to effectively compute the AUUC. This is presented in the next subsection.

\subsection{Toy example 2: AUUC on an unbalanced dataset}
\label{sec:toy_ex2}
Suppose that we have a uniform propensity score $\Pro(t=1|x)=q_0, \quad \forall x \in \X$. Table~\ref{table:V1_slope_comp} becomes Table~\ref{table:V1_slope_comp_uniform}.

\begin{table}[!ht]
\begin{center}
\begin{tabular}{ | c || c| c | c | c | } 
\hline
{\color{orange}  uplift curve} & CO & ST & LC & SD \\ 
\hline \hline
$\text{slope}(u)$ & $q_0$ & \multicolumn{2}{|c|}{$q_0-1/2$} & $q_0-1$\\ 
\hline
$\text{slope}(\uh_n)$ & $q_0$ & \multicolumn{2}{|c|}{$q_0-1/2$} & $q_0-1$\\ 
\hline
$\text{slope}(\uh_d)$ & $q_0$ & $2q_0-1$ & 0 & $q_0-1$\\ 
\hline
\end{tabular}
\caption{The slopes of the uplift curve, if the treatment probability (a.k.a. propensity score) is uniform.}
\label{table:V1_slope_comp_uniform}
\end{center}
\end{table}

We first notice here that the uplift curve built using $u$ is concave, leading to a maximal AUUC. But let us imagine we are able to distinguish ST and LC. We call this uplift model $\uh_d$. As we have one more edge to design through $\uh_d$, we are able to build $\uh_d$ with a strictly higher AUUC. This is shown on Figure \ref{fig:AUUC_counter2}, setting $q_0=3/4$ and $\alpha_1=1$, $\alpha_2=1/2$, $\alpha_3=-1/2$ and $\alpha_4=-1$ (these values sort the 4 groups as follow: CO, ST, LC, SD).


\begin{figure}[!ht]
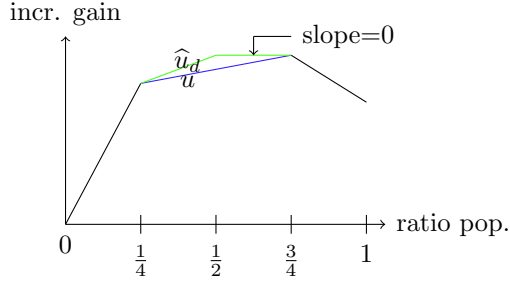

\centering
\ctikzfig{fig3}
\caption{Example when the dataset is unbalanced.}
\label{fig:AUUC_counter2}
\end{figure}

N.B.: we cannot face this issue if the dataset has been re-balanced (i.e. with $q_0=0.5$). See Section \ref{section:unbalanced_dataset} for a formal proof.

\begin{center}
\begin{tcolorbox}[width=.75\linewidth, halign=center, colframe=black, colback=blue!30, boxsep=1mm, arc=3mm]
When computing any AUUC, the dataset should be balanced.
\end{tcolorbox}
\end{center}
N.B.: We can view the re-balancing step as using an unbiased reward estimator (cf \eqref{eq:local_avg} for an IPS version).

\subsection{Toy example 3: AUUC on an unbalanced dataset, with same model capacity}
One could truly argue that the last constructed model is unfair, as the $\widehat{u}_d$ has more capacity. Indeed, model $\widehat{u}_d$ is able to distinguish ST and LC. We construct here a more general example to show that the idea is not about model capacity, but truly about unbalanced treated/untreated populations. Suppose we face 2 groups, $\X_1$ and $\X_2$ with the outcome random variables $\rew$ being Bernoulli distributed (mutually independent) as presented in Table \ref{table:V3_slope_comp_uniform}.

\begin{table}[!ht]
\begin{center}
\begin{tabular}{ | c || c| c | } 
\hline
{\color{orange} uplift curve} & $\X_1$ & $\X_2$ \\ 
\hline \hline
$\Pro(t=1\mid x \in \X_i)$ & $q_0$ & $q_0$\\ 
\hline
$\rew[1]$ & $\sim\text{Ber}(\beta_1^1)$ & $\sim\text{Ber}(\beta_2^1)$\\ 
\hline
$\rew[0]$ & $\sim\text{Ber}(\beta_1^0)$ & $\sim\text{Ber}(\beta_2^0)$\\ 
\hline
$\E[\rew[1]]$ & $\beta_1^1$ & $\beta_2^1$\\ 
\hline
$\E[\rew[0]]$ & $\beta_1^0$ & $\beta_2^0$\\ 
\hline
$\E[\tu]$ & $\beta_1^1-\beta_1^0$ & $\beta_2^1-\beta_2^0$\\ 
\hline
$\uh$ & $\alpha_1$ & $\alpha_2$\\ 
\hline
$\text{slope}$ & $q_0(\beta_1^1+\beta_1^0)-\beta_1^0$ & $q_0(\beta_2^1+\beta_2^0)-\beta_2^0$ \\ 
\hline
\end{tabular}
\caption{Generic counter example illustrating the need of using RCT 50-50\%}
\label{table:V3_slope_comp_uniform}
\end{center}
\end{table}
The slopes for $\tu$ and $\uh$ are the same within each group $\X_i$. Now the goal is to find $q_0$, $\beta_1^1$, $\beta_1^0$, $\beta_2^1$, $\beta_2^0$ such that we have an inversion of the group order. In other words, the goal is to construct a non concave uplift curve $u$, i.e. with increasing slopes (using the true uplift $\tu$), i.e. satisfying
\[ \tu(\X_1) \geq \tu(\X_2)  \text{ and } \text{slope}(\X_1)<\text{slope}(\X_2)\]
or replacing with numerical values, we find the following system of equations:
\begin{equation}
\label{eq:inversion}
\beta_1^1-\beta_1^0 \geq \beta_2^1-\beta_2^0  \text{ and } q_0(\beta_1^1+\beta_1^0)-\beta_1^0<q_0(\beta_2^1+\beta_2^0)-\beta_2^0    
\end{equation} 
These conditions are easily satisfied if $q_0 \neq .5$, by making $q_0$ very small. For numerical convenience, we exhibit here such values satisfying equation \eqref{eq:inversion}: $q_0=0.1$, $\beta_1^1=0.4$, $\beta_1^0=0.2$, $\beta_2^1=0.2$ and $\beta_2^0=0.1$.
We can then assign values to approximated uplift $\alpha_1=0.1$ and $\alpha_2=0.2$ to obtain a  model $\uh$ that outperforms the exact uplift $u$ i.e. with AUUC scores satisfying ($\text{AUUC}(\uh)>\text{AUUC}(u)$).

\begin{remark}
The latter counter-example is not possible anymore if $q_0=0.5$ (i.e. RCT 50\%-50\%), as $\text{slope}(\X_1)=0.5(\beta_1^1+\beta_1^0)-\beta_1^0=\tu(\X_1)/2$ and then the uplift curve cannot be non-concave anymore.
\end{remark}

\begin{remark}
Once again, this issue can be avoided by using the generalized formula of the uplift curve defined in Section \ref{section:unbalanced_dataset}. We can note that with this non 50\%-50\% RCT setting, we can easily estimate the propensity score $q_0$ (when not known) from data.
\end{remark}


\section{Working with Unbalanced Datasets}
\label{section:unbalanced_dataset}

The previous section presented examples showing the AUUC metric as defined in Section \ref{section:theoryauuc} should not be directly applied on an unbalanced dataset. In the current section, we show how we can manage that case and justify the $y$-axis and $x$-axis re-balancing of the uplift curve mentioned in Section \ref{sec:normtwoaxes}.

\subsection{Introducing Observed and Underlying Distributions}


Reminder: $\X \subset \R^d$ is the user feature space, with the associated probability measure $\Pro$. The full reward for every point $x \in \X$ is provided by $\rew(x) = (y^{(1)}(x), y^{(0)}(x)) \in \R^2$. The true uplift is therefore $\tau(x) = y^{(1)}(x) - y^{(0)}(x) \in \R$.

To simulate a real dataset, with a clear separation between the treatment and the control groups, we introduce $\XS$ with the common ITE estimator (\textit{signed} outcome function) $\tauh$:
\[ \XS = \X \times \{ 0,1 \} \]

\[ \tauh \colon \XS \to \R,  \quad \tauh(x, t) = y^{(1)}(x) t - y^{(0)}(x) (1 - t) = \begin{cases}
  y^{(1)}(x) & t = 1\\
  -y^{(0)}(x) & t = 0
\end{cases}
\]

$\XS$ is associated with two probability measures:
\begin{itemize}
    \item an \textit{observed} probability measure $\Qro_o$: $\Qro_o(x, t) = \Pro(x) \Sam_o(t | x)$
    \item an \textit{underlying} probability measure $\Qro_u$: $\Qro_u(x, t) = \Pro(x) \Sam_u(t)$. This is our target.
\end{itemize}

While the probability measure $\Pro$ over $\X$ is the same for both cases, the sampling measures $\Sam_o$ and $\Sam_u$ differ. The \textit{observed} sampling measure $\Sam_o$ is the cause of the possible dataset imbalance. It may or may not depend on $x$; it does not depend on $x$ in the case of a (possibly unbalanced) RCT. One constraint we need is that $\forall (x, t) \in \XS, \Pro(x) \neq 0 \implies \Sam_o(t|x) \neq 0$, i.e. for any point of $\X$ with a non zero probability, we always have access to both treatment and control parts (Assumption \ref{ass:overlap} \textit{Overlapping treatment} in Section \ref{sec:ITE=uplift}).

The \textit{underlying} probability measure is chosen as the result of a balanced RCT process: $\Sam_u(1) = \Sam_u(0) = 1/2$. By definition, it does not depend on $x$.

\subsection{$y$-axis re-balancing with $\VN$}

Using the \textit{observed} probability measure $\Qro_o$ for $\XS$, we want to compute $\VN$ and $\AUUC$ as if we observe directly the balanced \textit{underlying} probability measure $\Qro_u$ for $\XS$, or, even better, $\Pro$ for $\X$. Starting from Definition \ref{def:up_curve}, we show in Appendix \ref{appendix:unbalanced_dataset} that:

\begin{align*}
\VN(r \in [0,1]) &= \int_\X \tau(x) \one_{[\uh(x)\geq \xi(r)]} \Pro(x)\D x \quad \text{where} \quad \xi(r) =\inf \{\xi \in \R: \Pro(x : \uh(x)\geq \xi)\leq r\}\\
&= \int_{\XS} \tauh(x, t) \one_{[\uh(x)\geq \xi(r)]} 2 \Qro_u(x, t)\D (x,t) \\
&= \int_{\XS} \tauh(x, t) \one_{[\uh(x)\geq \xi(r)]} \frac{\Qro_o(x, t)}{\Sam_o(t|x)}\D (x,t) \\
\end{align*}

The $1 / \Sam_o(t|x)$ corresponds to the $y$-axis re-balancing of the uplift curve mentioned in Section  \ref{sec:defvariants}, where we presented $V(\cdot)$ functions that compute similar quantities but in different ways (absolute vs. relative incremental gains, global factors vs. local factors).

\subsection{$x$-axis re-balancing with $\AUNUCX$}

To highlight the necessary $x$-axis re-balancing when computing the $\AUNUC$, we have to get closer to the \textit{joint} setting. As explained in Section \ref{sec:normtwoaxes}, when considering a \textit{separated} setting (that corresponds to formula \eqref{auuc_def} with an integration over $[0, 1]$), the $x$-axis normalization is implicit.

Let $\VNX: \X \rightarrow \R$ be a variant function of $\VN: [0, 1] \rightarrow \R$ and $\AUNUCX: \X \rightarrow \R$ be a variant function of $\AUNUC: [0, 1] \rightarrow \R$:

\[ \VNX(x) = \int_\X \tau(\xp) \one_{[\uh(\xp) \geq \uh(x)]} \Pro(\xp)\D\xp \]

\[ \AUNUCX(x) = \int_\X \VNX(\xp) \one_{[\uh(\xp) \geq \uh(x)]} \Pro(\xp) \D(\xp) \]

In Appendix \ref{appendix:lebesgue}, we show that $\VN$ is equivalent to $\VNX$ and $\AUNUC$ is equivalent to $\AUNUCX$, i.e. that:

\[\VN(R(x)) = \VNX(x)\]
\[\AUNUC(R(x)) = \AUNUCX(x)\]

Where $R(x) \in [0, 1]$ associates $x$ to the minimal ratio $r$ such as $x$ is contained in the top-r\% when sorting the dataset by decreasing predicted uplift.

We show in Appendix \ref{appendix:unbalanced_dataset} that:
\begin{align}
\VNX(x \in \X) &= \int_\X \tau(\xp) \one_{[\uh(\xp) \geq \uh(x)]} \Pro(\xp)\D\xp \nonumber \\
&= \int_{\XS} \tauh(\xp, t) \one_{[\uh(\xp)\geq \uh(x)]} 2 \Qro_u(\xp, t)\D (\xp,t) \nonumber \\
&= \int_{\XS} \tauh(\xp, t) \one_{[\uh(\xp)\geq \uh(x)]} \frac{\Qro_o(\xp, t)}{\Sam_o(t|\xp)}\D (\xp,t) \label{form:vnx}
\end{align}

\begin{align}
\AUNUCX(x \in \X) &= \int_\X \VNX(\xp) \one_{[\uh(\xp) \geq \uh(x)]} \Pro(\xp) \D(\xp) \nonumber \\
&= \int_{\XS} \VNX(\xp) \one_{[\uh\xp)\geq \uh(x)]} \Qro_u(\xp, t)\D(\xp, t) \nonumber \\
&= \int_{\XS} \VNX(\xp) \one_{[\uh(\xp)\geq \uh(x)]} \frac{\Qro_o(\xp, t)}{2\,\Sam_o(t|\xp)}\D(\xp, t) \label{form:aunucs}
\end{align}

Now we have a $1 / \Sam_o(t|x)$ factor for both $\VNX$ ($y$-axis re-balancing of the uplift curve) and $\AUNUC$ ($x$-axis re-balancing of the uplift curve).

\subsection{Logged-bandit feedback datasets}

Reminder: the logged-bandit feedback dataset $D_N$ is a list of $N$ records:
\begin{equation*}
    D_N=\left\{ x_i, t_i, y_i^{(t_i)}, q_i \right\}_{i=1}^N, \quad \text{with} \quad q_i=\pi(t_i|x_i).
\end{equation*}

For the sake of simplicity, we consider this dataset as already sorted by decreasing predicted uplifts according to a given uplift model.

We consider here that all individuals have a distinct predicted uplift value. Managing group of individuals with a constant predicted uplift is the subject of the next section.

Formula \eqref{form:vnx} leads to definitions for $\VN$ and $V$ for the discrete case with a \textit{joint} setting: 

\[ \VN(k \in \{1...N\}) = \frac{1}{N} \sum_{i=1}^{k}{\frac{1}{q_i} \left( y^{(1)}_i t_i - y^{(0)}_i (1 - t_i) \right)} \]

\begin{align}
V(k \in \{1...N\}) = N \VN(k \in \{1...N\}) = \sum_{i=1}^{k}{\frac{1}{q_i} \left( y^{(1)}_i t_i - y^{(0)}_i (1 - t_i) \right)}  \label{form:vsum}
\end{align}

Similarly with Formula \eqref{form:aunucs} for $\AUNUC$ and $\AUUC$:

\[ \AUNUC(k \in \{1...N\}) = \frac{1}{N}\sum_{i=1}^k{\frac{\VN(i)}{2 q_i}} \]

\begin{align}
\AUUC(k \in \{1...N\}) = \frac{1}{N} \sum_{i=1}^k{\frac{V(i)}{2 q_i}}   \label{form:auucsum}
\end{align}

Note that in the RCT case, $q_i$ only depends on the sizes $|T|$ and $|C|$ of the treatment and control groups. Thus:

\[ V(k \in \{1...N\}) = \sum_{i=1}^k{\left(\frac{y^{(1)}_i t_i}{|T|} - \frac{y^{(0)}_i (1 - t_i)}{|C|} \right) (|T| + |C|)} \]

\begin{align*}
\AUUC(k \in \{1...N\}) &= \frac{1}{|T| + |C|} \sum_{i=1}^k{\left(\frac{t_i}{|T|}+ \frac{1 - t_i}{|C|} \right) V(i) \frac{|T| + |C|}{2}} \\
 &= \frac{1}{2} \sum_{i=1}^k{\left(\frac{t_i}{|T|} + \frac{1 - t_i}{|C|} \right) V(i)}
\end{align*}

When using the $\AUUC$ metric to compare several uplift models with one given dataset, we can omit the constant factors $(|T| + |C|)$ and $1 / 2$.

Finally, let's verify that the sums of the individual scaling factors along the $y$-axis (with $V$) and along the $x$-axis (with $\AUUC$) actually give the excepted values.

Along the $y$-axis, with scaling factors from $V$:
\[\sum_{i=1}^N{\left(\frac{t_i}{|T|} + \frac{(1 - t_i)}{|C|} \right) (|T| + |C|)} = \left( \sum_{i=1}^{|T|}{\frac{1}{|T|}} + \sum_{i=1}^{|C|}{\frac{1}{|C|}}\right)(|T| + |C|) = 2 (|T| + |C|) \]

Along the $x$-axis, with scaling factors from $\AUUC$:
\[ \frac{1}{2} \sum_{i=1}^N{\left(\frac{t_i}{|T|} + \frac{1 - t_i}{|C|} \right)} = \frac{1}{2} \left( \sum_{i=1}^{|T|}{ \frac{1}{|T|}} + \sum_{i=1}^{|C|}{\frac{1}{|C|}}\right)
= 1 \]

Indeed, the balanced uplift curve spans from 0 to 1 on the $x$-axis, and corresponds to a virtual dataset with twice the size of the original dataset (re-balancing effect) on the $y$-axis.

\subsection{Logged-bandit feedback datasets with iso-predicted-uplift sub-sequences}

Intuitively, when we sort the dataset by decreasing predicted uplift, the sub-sequences of individuals with the same predicted uplift are internally ordered in an indefinite way. This indeterminacy impacts the corresponding parts of the uplift curve drawn with Formula \eqref{form:vsum}, then the computed $\AUUC$ (Formula \ref{form:auucsum}).

Indeed, formulas \eqref{form:vnx} and \eqref{form:aunucs}, in particular their indicator function based on $\uh(\xp)\textcolor{red}{\geq} \uh(x)$, require that we should not split groups of users with the same predicted uplift.\\

Let $L \subseteq \{1..N\}$ be the set of the last individuals of all iso-predicted-uplift sub-sequences, i.e. $\forall i \in L: \; \uh(i) < \uh(i + 1) \lor i = N$.

One way to compute $AUUC$ with iso-predicted-uplift sub-sequences is to:
\begin{enumerate}
    \item Compute $V(k \in \{1..N\})$ (or $\VN(k)$) with Formula \eqref{form:vsum}.
    \item Only keep the $V(k \in L)$ values. Indeed, the indefinite ordering of an iso-predicted uplift sub-sequence does not impact the $V(k)$ value of its last individual.
    \item
    Re-compute $V(k \in {1..N} \setminus L)$ using a linear interpolation based on the $V(k \in L)$ values. (For all individuals sorted before the $k \in L$, use the origin $V(0) = 0$ as the left point for the interpolation.)
    \item Compute $AUUC(k \in \{1..N\})$ (or $AUNUC(k)$) using Formula \eqref{form:auucsum} with the \textit{interpolated} version of $V(k)$.
\end{enumerate}


\section{Computing the uplift curve: a new variance reduction estimator}
\label{sec:Vnu}
\subsection{$V_1$ - traditional construction}
In an RCT setting, we sort the users of the dataset from high to low predicted uplift using the permutation $\phi \in \mathfrak{S}_N$, resulting in a \textit{sorted} dataset instance $(x_{\phi(i)}, y_{\phi(i)}, t_{\phi(i)})$.

As presented in Section \ref{sec:aununc-emp}, the traditional (denoted by $V_1$ hereafter) construction of the uplift curve is as follows:
\begin{enumerate}
    \item we start from $(0,0)$ and we iterate over the sorted dataset,
    \item we go 1 up iff $t_{\phi(i)}=1$ and $y_{\phi(i)}=1$ (magenta column in Table \ref{table:v1_up}), 
    \item we go 1 down iff $t_{\phi(i)}=0$ and $y_{\phi(i)}=1$ (magenta row in Table \ref{table:v1_down}), 
    \item we stay flat otherwise.
\end{enumerate}

\begin{table}[!ht]
\begin{subtable}[h]{0.49\textwidth}
\centering
\begin{tabular}{l|l|bl}
                     &     & \multicolumn{2}{c}{$t=1$}   \\ \hline
                     &     & $y=1$         & $y=0 $        \\ \hline
 \multirow{2}{*}{$t=0$} & $y=1$ &  ST          & SD          \\ 
                     & $y=0$ & CO       & LC   
\end{tabular}
\caption{$V_1$: 1-up rule}
\label{table:v1_up}
\end{subtable}
\hfill
\begin{subtable}[h]{0.49\textwidth}
\centering
\begin{tabular}{l|l|ll}
                     &     & \multicolumn{2}{c}{$t=1$}   \\ \hline
                     &     & $y=1$         & $y=0$         \\ \hline
 \multirow{2}{*}{$t=0$} & \cellcolor{magenta} $y=1$ & \cellcolor{magenta} ST    & \cellcolor{magenta} SD        \\ 
                     & $y=0$ & CO        & LC         
\end{tabular}
\caption{$V_1$: 1-down rule}
\label{table:v1_down}
\end{subtable}
\caption{$V_1$ rules for constructing the incremental gain (groups CO, ST, SD, LC are defined in Section \ref{sec:toyexample1})}.
\end{table}

Basically, we can explain the $V_1$ rules by stating that the ST group is in both the \textit{1 up} and \textit{1 down} rules. So, \textit{on average}, the vertical contribution within the ST group cancels out.

\subsection{$V_2$ - inverted labels}

The previous rules are arbitrary. Indeed, we can design a different set of rules, called $V_2$, that is as well an unbiased point-wise estimator of the uplift curve (pointwise), as showed in Appendix \ref{appendix:v1_v2}.

As for the $V_1$ construction, users are sorted from high to low predicted uplift using the permutation $\phi \in \mathfrak{S}_N$. The new $V_2$ construction of the uplift curve is as follows
\begin{enumerate}
    \item we start from $(0,0)$ and we iterate over the sorted dataset,
    \item we go 1 up iff $t_{\phi(i)}=0$ and $y_{\phi(i)}=0$ (magenta row in Table \ref{table:v2_up}), 
    \item we go 1 down iff $t_{\phi(i)}=1$ and $y_{\phi(i)}=0$ (magenta column in Table \ref{table:v2_down}), 
    \item we stay flat otherwise.
\end{enumerate}

\begin{table}[!ht]
\begin{subtable}[T]{0.49\textwidth}
\centering
\begin{tabular}{l|l|ll}
                     &     & \multicolumn{2}{c}{$t=1$}   \\ \hline
                     &     & $y=1$         & $y=0$         \\ \hline
 \multirow{2}{*}{$t=0$} & $y=1$ &   ST          & SD          \\ 
                     & \cellcolor{magenta} $y=0$ & \cellcolor{magenta} CO & \cellcolor{magenta} LC         
\end{tabular}
\caption{$V_2$: "1-up" rule}
\label{table:v2_up}
\end{subtable}
\begin{subtable}[T]{0.49\textwidth}
\centering
\begin{tabular}{l|l|lb}
                     &     & \multicolumn{2}{c}{$t=1$}   \\ \hline
                     &     & $y=1$         & $y=0$         \\ \hline
 \multirow{2}{*}{$t=0$} & $y=1$ &  ST          & SD          \\ 
                     & $y=0$ & CO        & LC         
\end{tabular}
\caption{$V_2$: "1-down" rule}
\label{table:v2_down}
\end{subtable}
\caption{$V_2$ rules for constructing the incremental gain (groups CO, ST, SD, LC are defined in Section \ref{sec:toyexample1})}
\end{table}

Here, the LC group is in both the \textit{1 up} and \textit{1 down} rules. \textit{On average}, the vertical contribution within the LC group cancels out.

We can define $V_2$ with a similar formula as $V_1$ in Section \ref{sec:aununc-emp}:
$$\widehat{V}_2(k):=\frac{1}{N}\sum_{i=1}^k \mathbbm{1}\left(t_{\phi(i)}=0 \wedge y_{\phi(i)}=0\right)-\mathbbm{1}\left(t_{\phi(i)}=1 \wedge y_{\phi(i)}=0\right)$$
To build the uplift curve, we then join each pair of successive points $(\frac{k}{N},\widehat{V}_2(k))$, $(\frac{k+1}{N},\widehat{V}_2(k+1))$ for $k \in [N-1]$.

\subsection{Variance reduction for approximating uplift $\tu$}\label{sec:var_redic}
Without loss of generality, we assume here that the permutation is the identity, with $\phi(i)=i$, $\forall i \in [N]$.

We aim at computing the AUUC by integrating one uplift curve estimator $V_i$. The variance of the chosen estimator is crucial, as the smaller it is, the smaller the AUUC metric variance is.

We introduce here an estimator based on a linear combination of $V_1$ and $V_2$, i.e. $V_{\nu}=(1-\nu)V_1+\nu V_2$, where $\nu$ is chosen to minimize the variance of such an estimator.




In appendix \ref{appendix:v1_v2}, we show
that the variance of $V(\nu)$ is minimized when $\nu = \Pro(y=1|t=0) \Pro(t=1) + \Pro(y=1|t=1) \Pro(t=0)$. Or, equivalently, when $\nu + \Pro(y=1) = \Pro(y=1|t=0) + \Pro(y=1|t=1)$.

In particular, when the dataset is balanced (i.e. $\alpha = 0.5$), we have $\nu = \frac{p_0 + p_1}{2} = \Pro(Y=1)$.

\subsection{Numerical experiments}
The git repository can be found here
\url{https://github.com/criteo-research/uplift_modeling_metric}. 
We designed an heterogeneous synthetic dataset, where we can assess the different results of this report. More information can be found directly in the gitlab.

In figure \ref{fig:V1V2}, we display the AUUC metric distribution, while using the estimator $V_{\nu}$, for various range of the $\nu$ parameter. We can observe that the variance is minimal for some $\nu$ in between the interval $[0,1]$, as proven in Section \ref{sec:var_redic}.

On figure \ref{fig:nappe1} and \ref{fig:nappe2}, we present the variance w.r.t. various $\Pro(Y=1)$ and $\nu$. The minimal variance is reached at $\nu$ depending linearly of  $\Pro(Y=1)$, as proven in Section \ref{sec:var_redic}.

\begin{figure}[!ht]
\centering
\includegraphics[width=.7\textwidth]{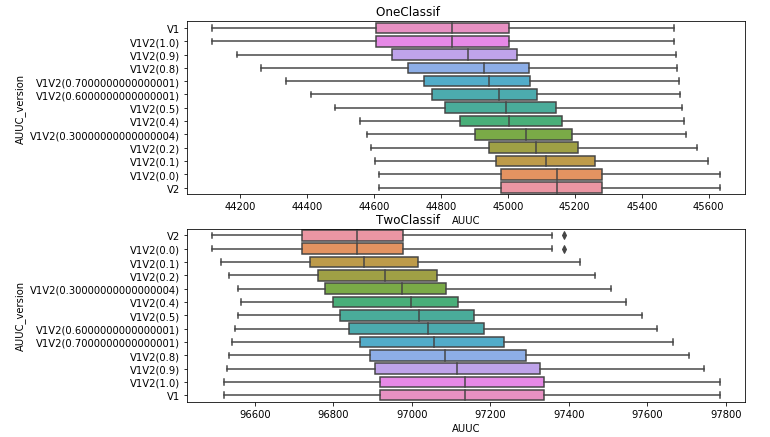}
\caption{AUUC metrics distribution $V_{\nu}$ for $\nu \in [0,1]$ computed for 101 realizations (datasets), using 2 different uplift models.}
\label{fig:V1V2}
\end{figure}

\begin{figure}[!ht]
\centering
\includegraphics[width=.7\textwidth]{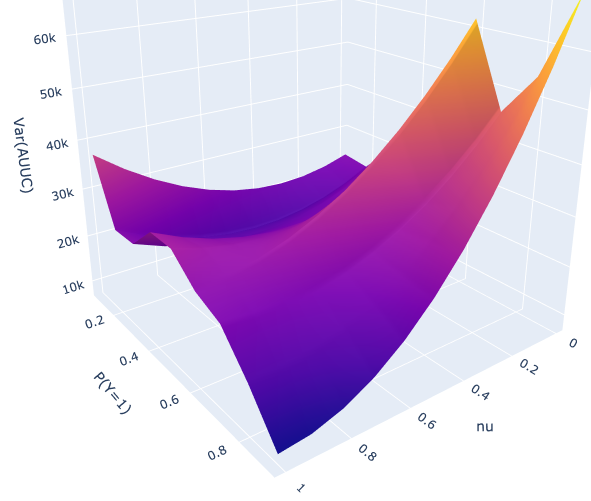}
\caption{Variance of AUUC using the $V_{\nu}$ estimator, displayed w.r.t. various $\Pro(Y=1)$ and $\nu$. Angle 1 }
\label{fig:nappe1}
\end{figure}

\begin{figure}[!ht]
\centering
\includegraphics[width=.7\textwidth]{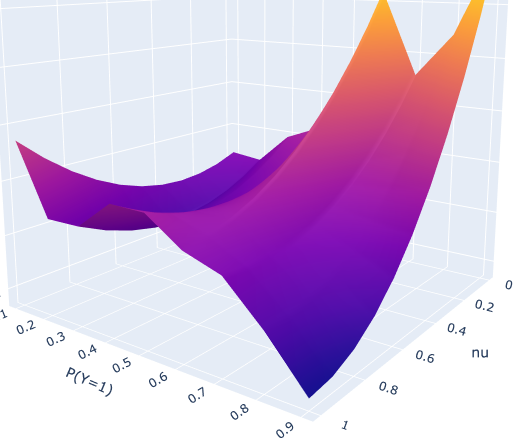}
\caption{Variance of AUUC using the $V_{\nu}$ estimator, displayed w.r.t. various $\Pro(Y=1)$ and $\nu$. Angle 1 }
\label{fig:nappe2}
\end{figure}

\section{Conclusion}
In this tech report we highlight some weaknesses of the traditional AUUC metric, denoted by $V_1$ in previous sections. More particularly, its potential very high variance, and its construction on an historically arbitrary rule. To overcome this potential problem, we proposed the 'dual' version of AUUC ($V_2$) which is build on somehow the reverted label of both treatment $T$ and outcome $Y$. Moreover, we propose a trade-off of both metrics, called $V_{\nu}$, being a linear combination of both metrics ($V_1$ and $V_2$) and we show that this linear version strictly minimizes the variance of this trade-off metric. This result is assessed in various synthetic experiments, with available code.


\bibliographystyle{abbrv}
\bibliography{./bibtex/bib}

\begin{thebibliography}{10}

\bibitem{Agrawal2001Privacy}
D.~Agrawal and C.~C. Aggarwal.
\newblock On the design and quantification of privacy preserving data mining
  algorithms.
\newblock In {\em Proceedings of the Twentieth ACM SIGMOD-SIGACT-SIGART
  Symposium on Principles of Database Systems}, PODS '01, page 247–255, New
  York, NY, USA, 2001. Association for Computing Machinery.

\bibitem{beji2020estimating}
C.~Beji, M.~Bon, F.~Yger, and J.~Atif.
\newblock Estimating individual treatment effects through causal populations
  identification, 2020.

\bibitem{Bottou2013}
L.~Bottou, J.~Peters, J.~Qui{{\~n}}onero-Candela, D.~X. Charles, D.~M.
  Chickering, E.~Portugaly, D.~Ray, P.~Simard, and E.~Snelson.
\newblock Counterfactual reasoning and learning systems: The example of
  computational advertising.
\newblock {\em Journal of Machine Learning Research}, 14(65):3207--3260, 2013.

\bibitem{brandfonbrener2020bandit}
D.~Brandfonbrener, W.~F. Whitney, R.~Ranganath, and J.~Bruna.
\newblock Bandit overfitting in offline policy learning, 2020.

\bibitem{caron2020estimating}
A.~Caron, I.~Manolopoulou, and G.~Baio.
\newblock Estimating individual treatment effects using non-parametric
  regression models: a review, 2020.

\bibitem{Cochran77}
W.~G. Cochran.
\newblock {\em Sampling Techniques, 3rd Edition.}
\newblock John Wiley, 1977.

\bibitem{devriendt2020learning}
F.~Devriendt, T.~Guns, and W.~Verbeke.
\newblock Learning to rank for uplift modeling, 2020.

\bibitem{durrett2019probability}
R.~Durrett.
\newblock {\em Probability: theory and examples}.
\newblock Cambridge university press, fifth edition edition, 2019.

\bibitem{Gutierrez2016CausalIA}
P.~Gutierrez and J.-Y. G{\'e}rardy.
\newblock Causal inference and uplift modelling: A review of the literature.
\newblock In {\em PAPIs}, 2016.

\bibitem{li2020general}
J.~Li, W.~Zhang, L.~Liu, K.~Yu, T.~Le, and J.~Liu.
\newblock A general framework for causal classification.
\newblock {\em arXiv preprint arXiv:2003.11940}, 2020.

\bibitem{mcbook}
A.~B. Owen.
\newblock {\em Monte Carlo theory, methods and examples}.
\newblock 2013.

\bibitem{PearlBookWhy}
J.~Pearl and D.~Mackenzie.
\newblock {\em The Book of Why: The New Science of Cause and Effect}.
\newblock Basic Books, Inc., USA, 1st edition, 2018.

\bibitem{PetersJanzingSchlkopfElements}
J.~Peters, D.~Janzing, and B.~Schlkopf.
\newblock {\em Elements of Causal Inference: Foundations and Learning
  Algorithms}.
\newblock The MIT Press, 2017.

\bibitem{Radcliffe2012RealWorldUM}
N.~J. Radcliffe and P.~D. Surry.
\newblock Real-world uplift modelling with significance-based uplift trees.
\newblock In {\em White Paper TR-2011-1, Stochastic Solutions, 2011.}, 2012.

\bibitem{rubin1974estimating}
D.~Rubin.
\newblock Estimating causal effects of treatments in randomized and
  nonrandomized studies.
\newblock {\em Journal of Educational Psychology}, 66(5):688--701, 1974.

\bibitem{Shah2016}
V.~Shah and Kinjalparmar.
\newblock A review on data anonymization in privacy preserving data mining.
\newblock {\em International Journal of Advanced Research in Computer and
  Communication Engineering}, 5:75--79, 02 2016.

\bibitem{zhang2020unified}
W.~Zhang, J.~Li, and L.~Liu.
\newblock A unified survey on treatment effect heterogeneity modeling and
  uplift modeling, 2020.

\end{thebibliography}

\newpage
\appendix

\section{Rewrite $\VN$ and $\AUNUC$ formulas}
\label{appendix:lebesgue}

This appendix contains some demonstrations for section \ref{section:unbalanced_dataset}, in addition to appendix \ref{appendix:unbalanced_dataset}.

Objective: transform $\VN(r \in [0,1])$ into $\VNX(x \in \X)$ and $\AUNUC(r \in [0,1])$ into $\AUNUCX(x \in \X)$.

First, we convert Riemann integrals over $[0, 1]$ (more precisely a subset of $[0, 1]$) into Lebesgue integrals over $\X$. Then we introduce uplift-like functions. Finally we transform $\VN$ and $\AUNUC$.

\subsection{From Riemann to Lebesgue}

\subsubsection{Notations and Properties}
\label{section:riem_leb_not}

Let $\X$ be a space with associated probability measure $\Pro$, and $\Ra$ be a subset of $[0, 1]$ that is a union of closed intervals (that could be singletons), and that necessary includes $0$ and $1$.

Let $\OX$ be any function in $\Ra \to \X$, such as:

$$\forall r_1, r_2 \in \Ra, \quad r_1 \leq r_2 \implies \OX(r_1) \subseteq \OX(r_2) \quad (1)$$

$$\forall r \in \Ra \quad \Pro(\OX(r)) = r \quad (2)$$

\begin{remark*}[Proper Subsets]
From (1) and (2) we deduce that $r_1 < r_2 \implies \OX(r_1) \subset \OX(r_2)$.

\begin{proof}
$r1 \neq r2 \implies \Pro(\OX(r_1)) \neq \Pro(\OX(r_2)) \implies \OX(r_1) \neq \OX(r_2)$
\end{proof}

\end{remark*}

\begin{remark*}
$\Pro(\OX(r_1)) < \Pro(\OX(r_2)) \iff r_1 < r_2 \iff \OX(r_1) \subset \OX(r_2)$.

\begin{proof}
$\OX(r_1) = \OX(r_2) \implies \Pro(\OX(r_1)) = \Pro(\OX(r_2)) \implies r_1 = r_2$ that is a contradiction.
\end{proof}
\end{remark*}
\begin{remark*}
(2) does not necessarily mean that $\OX(0) = \emptyset$ or that $\OX(1) = \X$. But this concerns parts of $\X$ with a zero measure.
\end{remark*}
\begin{remark*}
$\OX$ is a way to define a "growing part" of the dataset. For instance $\OX(0.2)$ selects 20\% (w.r.t. the probability measure) of the dataset, while $\OX(0.3)$ selects the same 20\%, plus 10 other \%.
\end{remark*}

Let $R$ be a function in $\X \to \Ra$ such as:
$$R(x) = \min \{r \in \Ra: x \in \OX(r)\} $$

\begin{remark*}
Direct consequence: $\forall r \in [0, R(x)[ \cap \R \quad x \notin \OX(r)$
\end{remark*}

\begin{remark*}
$R(x)$ is the minimal value $r$ to catch element $x$. By definition: $x \in \OX(R(x)) \quad \forall x \in \X$.
\end{remark*}

\begin{theo}
\label{theo:alt_def_ox}
$\OX(r \in \Ra) = \{x \in \X, R(x) \leq r\}$. This will be showed in the Part 1 of the proof of Theorem \ref{theo:transfo}.
\end{theo}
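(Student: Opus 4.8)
The plan is to establish the set equality $\OX(r) = \{x \in \X : R(x) \leq r\}$ by proving the two inclusions separately, leaning on the monotonicity property (1) of $\OX$ together with the fact that the minimum defining $R(x)$ is attained, so that $x \in \OX(R(x))$ as recorded in the remark preceding the statement.

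For the inclusion $\OX(r) \subseteq \{x : R(x) \leq r\}$, I would argue directly from the definition of $R$. Fix $r \in \Ra$ and take $x \in \OX(r)$; then $r$ lies in the set $\{r' \in \Ra : x \in \OX(r')\}$ over which $R(x)$ is minimized, so $R(x) \leq r$, which is exactly membership in the right-hand side. This direction uses nothing beyond the definition of $R$. For the reverse inclusion $\{x : R(x) \leq r\} \subseteq \OX(r)$, I would take $x$ with $R(x) \leq r$. The key input is that the minimum defining $R(x)$ is attained, so $x \in \OX(R(x))$; since $R(x) \leq r$ with both values in $\Ra$, monotonicity (1) gives $\OX(R(x)) \subseteq \OX(r)$, and therefore $x \in \OX(r)$, as desired.

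The main obstacle is not these two inclusions, which are immediate, but justifying the well-definedness of $R(x)$: that the infimum $\rho := \inf\{r \in \Ra : x \in \OX(r)\}$ is attained in $\Ra$ and that $x \in \OX(\rho)$. To handle this I would first observe that property (1) makes $\{r \in \Ra : x \in \OX(r)\}$ upward-closed in $\Ra$, then pick a decreasing sequence $r_n \downarrow \rho$ from this set, so that the nested sets $\OX(r_n)$ all contain $x$. Closedness of $\Ra$ (a union of closed intervals containing the endpoints) gives $\rho \in \Ra$, and continuity of the probability measure $\Pro$ from above gives $\Pro(\bigcap_n \OX(r_n)) = \lim_n r_n = \rho = \Pro(\OX(\rho))$ by (2).

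Since $\OX(\rho) \subseteq \bigcap_n \OX(r_n)$ by monotonicity, these two nested sets have equal finite measure and hence agree up to a $\Pro$-null set, which forces $x \in \OX(\rho)$ outside a negligible set of points. This is precisely the source of the "up to measure zero" caveats flagged in the surrounding remarks, and is the one step where the argument must be handled with care; everywhere else the statement follows purely formally from the definition of $R$ and from (1). I would therefore present the two inclusions as the skeleton of the proof and isolate the attainment of the infimum as the single technical lemma, stating it (if needed) with an almost-everywhere qualifier or under a left-continuity assumption on the family $r \mapsto \OX(r)$.
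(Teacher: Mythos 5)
Your two inclusions are exactly the paper's proof: the forward direction uses $x \in \OX(R(x))$ together with monotonicity (1), and the reverse direction is just the observation that $x \in \OX(r)$ places $r$ in the set over which $R(x)$ is minimized (the paper phrases this as a one-line contradiction via the remark that $x \notin \OX(r')$ for all $r' < R(x)$, but it is the same content). For the statement as posed, your argument is correct and matches the paper.

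The one place you go beyond the paper is the attainment of the minimum defining $R(x)$. The paper simply asserts $x \in \OX(R(x))$ as definitional and never justifies that the infimum is attained, so your instinct to isolate this as a lemma is reasonable; however, your proposed proof of it does not close. Continuity of $\Pro$ from above shows that $\OX(\rho)$ and $\bigcap_n \OX(r_n)$ have equal measure and hence differ by a $\Pro$-null set, but that does not force the \emph{particular} point $x$ to lie in $\OX(\rho)$ --- exactly the gap you flag yourself. In the paper this issue is moot, because $\OX$ is later instantiated concretely as $\OX(r) = \{x \in \X : T(U(x)) \leq r\}$ with $R = T \circ U$, for which $x \in \OX(R(x))$ holds tautologically. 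If one wants the abstract version, an explicit left-continuity (or closure-under-decreasing-intersection) hypothesis on $r \mapsto \OX(r)$ is the clean fix, rather than an almost-everywhere qualifier, since the theorem is a pointwise set identity.
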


$R^{-1}$ is the reverse image of $R$: $R^{-1}(A \subseteq \Ra) = \{x \in \X: R(x) \in A\}$.

In particular with $(r_1, r_2) \in \Ra^2$ and $r_1 \leq r_2$: $R^{-1}(]r_1, r_2] \cap \Ra) = \{x \in \X: r_1 < R(x) \leq r_2\}$


Let $N$ be a function in $[0, 1] \to \Ra$ such as:
$$N(r) = \min \{r^\prime \in \Ra: r^\prime \geq r\}$$

\begin{remark*}$N$ maps each open interval $\subset [0, 1] \setminus \Ra$ to the lower bound of the next closed interval of $\Ra$.
\end{remark*}

$N^{-1}$ is the reverse image of $N$: $N^{-1}(A \subseteq \Ra) = \{r \in [0, 1]: N(r) \in A\}$.

In particular with $(r_1, r_2) \in \Ra^2$ and $r_1 \leq r_2$: $N^{-1}(]r_1, r_2] \cap \Ra) = ]r_1, r_2]$ (this will be showed in the Part 2 of the proof of Theorem \ref{theo:transfo}).

Finally, let $f$ be any measurable function from $\Ra$ to $\R$ so that $f > 0$ or $E|f(R)| < \infty$.

\subsubsection{Riemann-to-Lebesgue transformation}
\begin{theo}[The Transformation Theorem]
\label{theo:transfo}

$$\int_{[0, 1]} f(N(r)) \, \D r = \int_\X f(R(x)) \, \D\Pro(x)$$

\end{theo}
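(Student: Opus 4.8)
The plan is to recognize both sides as integrals of $f$ against pushforward measures on $\Ra$, and to reduce the whole statement to the equality of these two measures. Indeed, writing $\lambda$ for Lebesgue measure on $[0,1]$, the left-hand side is $\int_\Ra f \, \D(N_\#\lambda)$ and the right-hand side is $\int_\Ra f \, \D(R_\#\Pro)$ by the change-of-variables identity for pushforwards, so it suffices to prove $R_\#\Pro = N_\#\lambda$.

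First I would dispatch the two auxiliary facts flagged in the setup. \textbf{Part 1} (Theorem \ref{theo:alt_def_ox}): to show $\OX(r) = \{x \in \X : R(x) \leq r\}$ for $r \in \Ra$, I argue both inclusions. If $x \in \OX(r)$ then $r \in \{r' \in \Ra : x \in \OX(r')\}$, so $R(x) \leq r$ by minimality. Conversely, if $R(x) \leq r$, I use $x \in \OX(R(x))$ (the minimum is attained, as recorded in the setup) together with monotonicity (1) applied to $R(x) \leq r$, giving $\OX(R(x)) \subseteq \OX(r)$ and hence $x \in \OX(r)$. \textbf{Part 2}: to show $N^{-1}(]r_1, r_2] \cap \Ra) = ]r_1, r_2]$ for $r_1 \leq r_2$ in $\Ra$, I unfold $N(r) = \min\{r' \in \Ra : r' \geq r\}$. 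For $r \in ]r_1, r_2]$, the fact $r_2 \in \Ra$ yields $N(r) \leq r_2$ while $N(r) \geq r > r_1$, so $N(r) \in ]r_1, r_2]$. Conversely $r_1 < N(r) \leq r_2$ forces $r \leq N(r) \leq r_2$, and were $r \leq r_1$, minimality with $r_1 \in \Ra$ would give $N(r) \leq r_1$, a contradiction; hence $r \in ]r_1, r_2]$.

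Then comes the main step. Set $\mu := R_\#\Pro$ and $\nu := N_\#\lambda$, both Borel probability measures on $\Ra$, and evaluate them on the half-open intervals $]r_1, r_2] \cap \Ra$ with $r_1 \leq r_2$ in $\Ra$. For $\mu$, Part 1 identifies $\{x : r_1 < R(x) \leq r_2\}$ with $\OX(r_2) \setminus \OX(r_1)$; since $\OX(r_1) \subseteq \OX(r_2)$ by (1) and $\Pro(\OX(r_i)) = r_i$ by (2), this set has measure $r_2 - r_1$. For $\nu$, Part 2 gives $\nu(]r_1,r_2]\cap\Ra) = \lambda(]r_1,r_2]) = r_2 - r_1$. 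Thus $\mu$ and $\nu$ coincide on this family. I would close with a uniqueness-of-measures argument: these intervals form a $\pi$-system (their pairwise intersection is again of the same form, with endpoints $\max(r_1,r_3),\min(r_2,r_4) \in \Ra$) that generates the Borel $\sigma$-algebra of $\Ra$, and both measures assign mass $1$ to $]0,1]\cap\Ra$; hence $\mu = \nu$ and the integral identity follows.

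The main obstacle I anticipate is purely measure-theoretic bookkeeping around the disconnected structure of $\Ra$: verifying that the chosen half-open intervals really generate the subspace Borel $\sigma$-algebra, and that null sets (for instance the point $R(x)=0$, where $\Pro(\OX(0))=0$) are treated consistently on both sides. The algebraic heart — reducing everything to $\mu = \nu$ through Parts 1 and 2 — is straightforward, and the hypothesis $f > 0$ or $\E|f(R)| < \infty$ only serves to guarantee that both integrals are well defined (possibly $+\infty$, or finite).
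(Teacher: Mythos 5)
Your proposal follows essentially the same route as the paper: both sides are rewritten as integrals against the pushforward measures $R_\#\Pro$ and $N_\#\lambda$ on $\Ra$ via the change-of-variables formula, and the two measures are shown to agree by evaluating them on half-open intervals $]r_1,r_2]\cap\Ra$ using exactly the same Part 1 and Part 2 arguments. Your explicit $\pi$-system uniqueness step is a welcome tightening of a point the paper leaves implicit, but the proof is otherwise identical in structure and substance.
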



\begin{proof}
We use twice \cite[Th. 1.6.9]{durrett2019probability} (\textit{Change of variables formula}).

First, by using the push-forward measure of $\Pro$ from $\X$ to $\Ra$, with a variable substitution using $R(x)$:

$$\int_\Ra f(r) \, \D (\Pro \circ R^{-1})(r) = \int_\X f(R(x)) \, \D\Pro(x)$$

Second, by using the push-forward measure of $\mu$ from $[0, 1]$ to $\Ra$, with a variable substitution using $N(r)$ (where $\mu$ is the Lebesgue measure on $\R$ as defined in \cite[Th. 1.1.4]{durrett2019probability}):

$$\int_\Ra f(r) \, \D (\mu \circ N^{-1})(r) = \int_{[0, 1]} f(N(r))\, \D r$$



So we just need to demonstrate that $\forall A \subseteq \Ra, (\Pro \circ R^{-1})(r) = (\lambda \circ N^{-1})(r)$

More precisely, we show that $\forall (r_1, r_2) \in \Ra^2$, with $r_1 \leq r_2$:

$$(\Pro \circ R^{-1})(]r_1, r_2] \cap \Ra) = (\lambda \circ N^{-1})(]r_1, r_2] \cap \Ra) = r_2 - r_1$$

\begin{proof}[Part 1: $(\Pro \circ R^{-1})$]

We first prove Theorem \ref{theo:alt_def_ox}: $\{x \in \X, R(x) \leq r\} = \OX(r) \quad \forall r \in \Ra$

($\implies$) $R(x) \leq r \implies \OX(R(x)) \subseteq \OX(r)$, by definition of $\OX$. We know that $x \in \OX(R(x))$, so $x \in \OX(r)$

($\impliedby$) Let us consider an $x \in \OX(r)$ such as $R(x) > r$. So, $\forall r^\prime < R(x) \quad x \notin \OX(r^\prime)$. In particular with $r^\prime = r$, we have a contradiction. Therefore $R(x) \leq r$.\\

$\forall (r_1, r_2) \in \Ra^2$, with $r_1 \leq r_2$:

\begin{align*}
R^{-1}(]r_1, r_2] \cap \Ra) & = \{x \in \X : r_1 < R(x) \leq r_2\} \\
&= \{x \in \X:  R(x) \leq r_2\} \setminus \{x \in \X:  R(x) \leq r_1\} \\
&= \OX(r_2) \setminus \OX(r_1) \\
\Pro(R^{-1}(]r_1, r_2] \cap \Ra)) &= \Pro(\OX(r_2)) - \Pro(\OX(r_1)) \quad \textit{$\OX(r_1)$ being a subset of $\OX(r_2)$}\\
&= r_2 - r_1
\end{align*}

\end{proof}

\begin{proof}[Part 2: $(\lambda \circ N^{-1})$]

We first show: $N^{-1}(]r_1, r_2] \cap \Ra) = ]r_1, r_2] \quad \forall (r_1, r_2) \in \Ra^2$ \textit{with}  $r_1 \leq r_2$

($\implies$) $\forall r \in ]r_1, r_2] \cap \Ra$: $N^{-1}(r) = r \in ]r_1, r_2]$

($\impliedby$) $\forall r \in ]r_1, r_2]$: $N(r) \in \Ra$ and $r \leq N(r) \leq r_2$ (by definition of $N$ and because $r_2 \in \Ra$). So $N(r) \in ]r_1, r_2] \cap \Ra$, therefore $r \in N^{-1}(]r_1, r_2] \cap \Ra)$.\\

$\forall (r_1, r_2) \in \Ra^2$, with $r_1 \leq r_2$:

\begin{align*}
    \mu(N^{-1}(]r_1, r_2] \cap \Ra)) &= \mu(]r_1, r_2]) \\
    &= r_2 - r_1 \quad \textit{\cite[Th. 1.1.4]{durrett2019probability}}
\end{align*}

\end{proof}




\end{proof}

\begin{remark*} Measure of singletons of $\Ra$: a singleton $\{r\} \subset \Ra$ has a non-zero measure iff $r > 0$ and r is a lower bound of one of the closed intervals composing $\Ra$.

\begin{proof}
($\implies$) $\forall r_2 \in \Ra$ such as $r_2 > 0$ and $(\Pro \circ R^{-1})(\{r_2\}) = a > 0$: 

$r \in ]r_2 - a, r_2[ \cap \Ra \implies \{r_2\} \subseteq ]r, r_2[ \implies R^{-1}(\{r_2\}) \subseteq R^{-1}(]r, r_2[) \implies (\Pro \circ R^{-1})(\{r_2\}) \leq (\Pro \circ R^{-1})(]r, r_2[) \implies a \leq r_2 - r \implies r \leq r_2 - a$

that is a contradiction with the definition of $r$. Thus, $]r_2 - a, r_2[ \cap \Ra = \emptyset$.


($\impliedby$)
$\forall (r_1, r_2) \in \Ra^2$, such as $r_1 < r_2$ and $]r_1, r_2[ \cap \Ra = \emptyset$:

$]r_1, r_2] = \{r_2\}$. Thus: $(\Pro \circ R^{-1})(\{r_2\}) = r_2 - r_1 > 0$ and $r_2 > 0$.

\end{proof}

\end{remark*}

\subsubsection{Application of Theorem \ref{theo:transfo} to integration on $[0, r]$}
\label{sec:app_integration}

Let $g$ be the anti-derivative function of $f \circ N$:

\begin{align*}
g(r \in [0, 1]) &= \int_0^r f(N(r^\prime)) \D r^\prime \\
&= \int_{[0, 1]} f(N(r^\prime)) \one_{[r^\prime \leq r]} \D r^\prime
\end{align*}

When $r \in \Ra$:
\begin{align*}
g(r \in \Ra) &= \int_{[0, 1]} f(N(r^\prime)) \one_{[N(r^\prime) \leq r]} \D r^\prime \quad \textit{N is increasing and} \; N(r \in \Ra) = r \\
&= \int_\X f(R(x)) \one_{[R(x) \leq r]} \D \Pro(x) \quad \textit{$\one_{[r^\prime \leq r]}$ is a measurable function}\
\end{align*}

Because $\Ra$ is a union of closed intervals that includes $0$ and $1$: when $r \notin \Ra$, $\exists! (r_1, r_2) \in \Ra^2$ such as $r \in ]r_1, r_2[$ and $]r_1, r_2[ \cap \Ra = \emptyset$. Then, $N(r) = r_2$.

\begin{align*}
g(r \in [0, 1] \setminus \Ra) &= \int_0^{r_1} f(N(r^\prime)) \D r^\prime + \int_{r_1}^r f(N(r^\prime)) \D r^\prime \\
&= g(r_1) + f(r_2) \int_{r_1}^r \D r^\prime \\
&= g(r_1) + f(r_2) (r - r_1) \\
\end{align*}

This means that $g$ fills gaps in $\Ra$ with a linear interpolation.

Hereafter, we will focus on $r$ values in $\Ra$.

\subsection{Introducing uplift-like functions}

\subsubsection{Notations and Properties}

Let $U$ be a function in $\X \to \R$ ($U$ is not necessarily injective or surjective), such as its image $\Up^*$ is a union of closed intervals and $\exists (a, b) \in \Up^{*2}, \Up^* \subseteq [a, b]$.

Let $\Up$ be a set such as $\Up = \Up^* \cup \{u\}$ with any $u > \max(\Up^*)$. $\Up$ has a minimal element (the same as $\Up^*$) and a maximal element (the chosen $u$).

Let $T$ be a function in $\Up \to [0, 1]$ such as: $T(u) = \Pro(x \in \X: U(x) \geq u)$, and let $\Ra$ be its image.

\begin{remark*}
$\{0, 1\} \subseteq \Ra$
\begin{proof}
$T(\min(\Up)) = \Pro(x \in \X: U(x) \geq \min(\Up)) = \Pro(\X) = 1$

$T(\max(\Up)) = \Pro(x \in \X: U(x) \geq \max(\Up)) = \Pro(\emptyset) = 0$ because $\forall x \in \X, U(x) \leq \max(\Up^*) < \max(\Up)$.
\end{proof}
\end{remark*}

\begin{remark*}
$T$ is decreasing.
\begin{proof}
$\forall u_1, u_2 \in \Up, u_1 \leq u_2 \iff \{x \in \X: U(x) \geq u_2\} \subseteq \{x \in \X: U(x) \geq u_1\} \iff T(u_2) \leq T(u_1)$
\end{proof}
\end{remark*}
\begin{remark*}
$T$ is not necessarily injective or surjective. 
\end{remark*}

Let $T^{-1}$ be the inverse image of $T$: $T^{-1}(\mathcal{D} \subset \Ra) = \{u \in \Up: T(u) \in \mathcal{D}\}$.

In particular, when considering singletons of $\Ra$: $T^{-1}(\{r\} \subset \Ra) = \{u \in \Up: T(u) = r\} = \{u \in \Up: \Pro(x \in \X: U(x) \geq u) = r\}$


\subsubsection{Definition of $\OX$}
Now we use $T$ and $U$ to define a $\OX$ function that respects both constraints (1) and (2) from section \ref{section:riem_leb_not}.
\begin{defi*}
$$\OX(r \in \Ra) = \{ x \in \X: T(U(x)) \leq r\}$$
\end{defi*}





$\OX$ respects both constraints (1) and (2) from section \ref{section:riem_leb_not} that are mandatory to use theorem \ref{theo:transfo}.

\begin{proof}
$$(1) \quad r_1 \leq r_2 \implies \OX(r_1) \subseteq \OX(r_2) \quad \text{by the definition of $\OX$}$$

$$(2) \quad T(U(x \in \X)) = \Pro(\xp \in \X: U(\xp) \geq U(x)) = \Pro(\xp \in \X: T(U(\xp)) \leq T(U(x))) = \Pro(\OX(T(U(x))))$$
$$\textit{therefore} \quad \forall r \in \Ra, r = \Pro(\OX(r))$$

\end{proof}

\subsubsection{Relation between $T$, $U$ and $R$}

\begin{theo}
\label{theo:tur}
$T(U(x \in \X)) = R(x)$
\end{theo}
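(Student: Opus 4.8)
The plan is to unfold the two definitions that tie $x$ to the uplift function and observe that the claimed identity becomes immediate once the section-specific $\OX$ is substituted into the definition of $R$. Recall that in this subsection $\OX$ is defined by $\OX(r) = \{x \in \X : T(U(x)) \leq r\}$, so that the membership $x \in \OX(r)$ is equivalent to the scalar inequality $T(U(x)) \leq r$. This equivalence is the entire content of the argument.

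First I would rewrite $R(x) = \min\{r \in \Ra : x \in \OX(r)\}$ using this equivalence, obtaining $R(x) = \min\{r \in \Ra : T(U(x)) \leq r\}$. Next I would check that $T(U(x))$ itself lies in $\Ra$: since $U(x) \in \Up^* \subseteq \Up$ and $\Ra$ is by definition the image of $T$, the value $T(U(x))$ is a legitimate element of $\Ra$. Consequently $T(U(x))$ belongs to the constraint set $\{r \in \Ra : T(U(x)) \leq r\}$ (it satisfies the inequality with equality), whereas no $r \in \Ra$ strictly smaller than $T(U(x))$ can satisfy $T(U(x)) \leq r$. This pins down the minimum and yields $R(x) = T(U(x))$ directly.

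The argument has no genuine obstacle; the only points deserving care are that the minimum is actually attained (guaranteed because the constraint set contains $T(U(x))$ and is nonempty, as $1 \in \Ra$) and that $\Ra$ is exactly the image of $T$, so that $T(U(x)) \in \Ra$. An alternative route, should one prefer to reuse the earlier machinery, is to combine the two descriptions of $\OX$: Theorem \ref{theo:alt_def_ox} gives $\OX(r) = \{x : R(x) \leq r\}$, while the local definition gives $\OX(r) = \{x : T(U(x)) \leq r\}$. Equating these for $r = R(x)$ forces $T(U(x)) \leq R(x)$, and equating them for $r = T(U(x))$ forces $R(x) \leq T(U(x))$; together these give equality. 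I would present the direct minimisation as the main proof and mention this double-inclusion check only as a remark.
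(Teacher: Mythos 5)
Your proof is correct and follows essentially the same route as the paper's: unfold the definition of $\OX$ so that $x \in \OX(r)$ becomes $T(U(x)) \leq r$, and observe that the minimum over $\Ra$ is attained at $T(U(x))$ itself. You are in fact slightly more careful than the paper in explicitly noting that $T(U(x)) \in \Ra$ (because $\Ra$ is the image of $T$), a point the paper's proof uses implicitly.
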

\begin{proof}
$$\forall x \in \X, \forall r < T(U(x)), x \notin \{\xp \in \X: T(U(\xp)) \leq r \} \quad \textit{so} \quad x \notin \OX(r)$$

$$\forall x \in \X, \forall r \geq T(U(x)), x \in \{\xp \in \X: T(U(\xp)) \leq r \} \quad \textit{so} \quad x \in \OX(r)$$

Thus, $T(U(x))$ is the minimal value $r \in \Ra$ such as $x \in \OX(r)$. So $T(U(x \in \X)) = R(x)$, by definition of $R$.
\end{proof}


\subsubsection{Alternative definition of $\OX$, with $\xi$}

\begin{align*}
\OX(r \in \Ra) &= \{ x \in \X: T(U(x)) \leq r\} \\
&= \{ x \in \X: U(x) \geq u, \forall u \in T^{-1}(\{r\}) \} \quad \textit{$T$ being decreasing} \\
&= \{ x \in \X: U(x) \geq \inf(T^{-1}(\{r\})) \} \\
&= \{ x \in \X: U(x) \geq \xi_e(r) \}
\end{align*}
where $\xi_e(r \in \Ra) = \inf(T^{-1}(\{r\})) = \inf\{u \in \Up: \Pro(x \in \X: U(x) \geq u) = r\}$.

Now we consider a slightly different function $\xi$ that was defined in Definition \ref{def:up_curve} (Section \ref{section:theoryauuc}): $\xi(r \in \textcolor{red}{[0, 1]}) = \inf \{u \in \textcolor{red}{\R}: \Pro(x \in \X : U(x)\geq u)\textcolor{red}{\leq} r\}$.

Note that the domains of $r$ and $u$, and the comparison with $r$ are different.

\begin{theo}
\label{theo:xi}
$\forall r \in [0, 1],\; \xi(r) = \xi(N(r))$ with function $N$ as defined in Section \ref{section:riem_leb_not}.
\end{theo}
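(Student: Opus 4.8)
The plan is to combine the monotonicity of $\xi$ with the fact that the range $\Ra$ of $T$ skips everything strictly between $r$ and $N(r)$. First I would record the trivial inequality: since $T$ is non-increasing, the sublevel sets $\{u\in\R:T(u)\le c\}$ grow with $c$, so $\xi(c)=\inf\{u\in\R:T(u)\le c\}$ is non-increasing in $c$. As $N(r)=\min\{r'\in\Ra:r'\ge r\}\ge r$ by construction, this immediately yields $\xi(N(r))\le\xi(r)$, and the whole content of the theorem is the reverse inequality $\xi(r)\le\xi(N(r))$.

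Next I would split on whether $r\in\Ra$. If $r\in\Ra$, then $N(r)=r$ and there is nothing to prove. So assume $r\notin\Ra$. Because $\Ra$ is a union of closed intervals containing $0$ and $1$, the point $r$ sits in a maximal open gap $(r_1,r_2)$ with $r_1,r_2\in\Ra$ and $(r_1,r_2)\cap\Ra=\emptyset$, and by definition $N(r)=r_2$. The target is therefore the equality $\xi(r)=\xi(r_2)$.

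The key reduction is to compare the two defining sublevel sets $A(r):=\{u\in\R:T(u)\le r\}$ and $A(r_2):=\{u\in\R:T(u)\le r_2\}$. Since $T$ takes values only in $\Ra$ and $\Ra\cap(r,r_2]=\{r_2\}$ (using $(r_1,r_2)\cap\Ra=\emptyset$ and $r\in(r_1,r_2)$), every $u$ counted by $A(r_2)$ but not by $A(r)$ must satisfy $T(u)=r_2$; that is, $A(r_2)\setminus A(r)=\{u\in\R:T(u)=r_2\}$. Because $A(r)\subseteq A(r_2)$, we have $\xi(r_2)=\inf A(r_2)=\min\bigl(\xi(r),\ \inf\{u:T(u)=r_2\}\bigr)$, so the claimed equality $\xi(r)=\xi(r_2)$ reduces exactly to showing that adjoining this level set does not lower the infimum, i.e.
\[ \inf\{u\in\R:T(u)=r_2\}\ \ge\ \xi(r). \]

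This last inequality is the main obstacle, and it is precisely where the structural hypotheses must be invoked. The difficulty is that $r_2\in\Ra$ is attained on a whole level set $\{u:T(u)=r_2\}$, which by monotonicity and the left-continuity of $u\mapsto\Pro(U\ge u)$ is an interval whose right endpoint is the jump location generating the gap $(r_1,r_2)$; a priori this interval could extend to the left of $\xi(r)$, and controlling its left endpoint is the crux. I would rule out such a leftward extension (a ``flat'' stretch of $T$ at height $r_2$ just before the jump) by using the assumption that the image $\Up^*$ of $U$ is a union of closed intervals, which constrains where $T$ can be simultaneously flat and have a jump at its level. The careful bookkeeping of the strict-versus-non-strict inequality ($\le r$ inside the infimum defining $\xi$) at that jump point is the delicate step I expect to have to write out in full, and it is the part on which the argument stands or falls.
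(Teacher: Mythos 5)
Your reduction is correct and, up to the decisive step, more careful than the paper's own argument: everything does hinge on the inequality $\inf\{u : T(u)=r_2\} \geq \xi(r)$, and you are right to flag it as the point on which the proof stands or falls. But the proposal stops exactly there, and the step you defer cannot be carried out: under the stated hypotheses that inequality --- and hence the theorem --- is false. Take $U$ with $\Pro(U=0)=\Pro(U=1)=1/2$, so that $\Up^*=\{0\}\cup\{1\}$ is a union of closed intervals as required. Then $T(v)=\Pro(U\geq v)$ equals $1$ for $v\leq 0$, equals $1/2$ for $0<v\leq 1$, and equals $0$ for $v>1$, so $\Ra=\{0,1/2,1\}$. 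For $r=1/4$ one gets $N(r)=1/2$, while $\xi(1/4)=\inf\{v: T(v)\leq 1/4\}=1$ and $\xi(1/2)=\inf\{v: T(v)\leq 1/2\}=0$. Here the level set $\{v: T(v)=r_2\}=(0,1]$ is a nondegenerate interval extending well to the left of $\xi(r)=1$: exactly the ``flat stretch of $T$ at height $r_2$ just before the jump'' that you hoped to exclude. The hypothesis that $\Up^*$ is a union of closed intervals cannot exclude it, because it constrains the \emph{image} of $U$ (which may consist of isolated points), not the support of the law of $U(x)$ under $\Pro$; any uplift model that is piecewise constant on groups of positive measure --- including the models in the paper's own toy examples --- produces precisely this configuration.

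For completeness: the paper's proof does not close this gap either. Its Part 2 rests on the claimed equivalence $T(u)\leq r \iff T(u)\leq N(r)$, whose backward implication fails whenever $T(u)=N(r)>r$, which is exactly the case your decomposition isolates (in the example above, $T(1)=1/2=N(1/4)>1/4$). The statement becomes true if one adds the hypothesis that $T$ is injective on $[\min\Up^*,\max\Up^*]$ --- equivalently, that $\{u: T(u)=r_2\}$ is a singleton for every $r_2\in\Ra$ --- or if $N$ is replaced by the downward rounding $r\mapsto\max\{r'\in\Ra: r'\leq r\}$, for which the defining set $\{u: T(u)\leq r\}$ is genuinely unchanged. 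As written, your argument is incomplete at its key step, and no bookkeeping of strict versus non-strict inequalities will rescue it; what is needed is an additional assumption, not a finer analysis.
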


\begin{proof}
$ $

\begin{proof}[Part 1: from "$u \in \R$" to "$u \in \Up$"]
$ $

$\xi(r \in [0, 1]) = \inf \{u \in \textcolor{red}{\Up}: \Pro(x \in \X : U(x)\geq u)\leq r\}$

Indeed, if we consider a $r$ such as $\xi(r) \in \R \setminus \Up$ and $\up$ such as $\up = \max\{u \in \Up: u < \xi(r)\}$ : $\{x \in \X: U(x) \leq  u \} = \{x \in \X: U(x) \leq \up \}$

Hereafter, we will consider a definition of $\xi$ using $\Up$.
\end{proof}

\begin{proof}[Part 2: $\xi(r) = \xi(N(r))$]
$ $

$\forall u \in \Up, \; \Pro(x \in \X: U(x) \geq u) \in \Ra$ by definition of $T$.

Thus $\forall r \in [0, 1], \; \Pro(x \in \X : U(x)\geq u)\leq r \iff \Pro(x \in \X : U(x)\geq u)\leq \min \{r^\prime \in \Ra: r^\prime \geq r\} \iff \Pro(x \in \X : U(x)\geq u)\leq (N(r))$

\end{proof}

\end{proof}

\begin{theo}
\label{theo:xib}
$\forall r \in \Ra,\; \xi(r) = \xi_e(r)$
\end{theo}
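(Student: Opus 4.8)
The plan is to reduce both $\xi(r)$ and $\xi_e(r)$ to infima of explicit subsets of $\Up$ and then show those two infima coincide, using only that $T$ is decreasing and that $r\in\Ra$ forces the fiber $T^{-1}(\{r\})$ to be nonempty. First I would invoke Part~1 of Theorem~\ref{theo:xi} to replace the domain $\R$ in the definition of $\xi$ by $\Up$, so that $\xi(r)=\inf A$ with $A=\{u\in\Up: T(u)\leq r\}$, while by definition $\xi_e(r)=\inf B$ with $B=\{u\in\Up: T(u)=r\}=T^{-1}(\{r\})$. Since $r\in\Ra$ is the image of $T$, the set $B$ is nonempty, so both infima are taken over nonempty sets.

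The easy inclusion gives one inequality: $B\subseteq A$ trivially (as $T(u)=r\implies T(u)\leq r$), hence $\xi(r)=\inf A\leq\inf B=\xi_e(r)$. The content of the theorem is the reverse inequality, for which I would show that $\inf B$ is in fact a lower bound for all of $A$. Fix any $u_0\in B$, so $T(u_0)=r$, and take an arbitrary $u\in A$. If $u\geq u_0$ then $u\geq u_0\geq\inf B$. Otherwise $u<u_0$, and because $T$ is decreasing we get $T(u)\geq T(u_0)=r$; combined with $T(u)\leq r$ (from $u\in A$) this forces $T(u)=r$, i.e.\ $u\in B$, whence again $u\geq\inf B$. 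In both cases $u\geq\inf B$, so $\inf B$ is a lower bound of $A$ and therefore $\xi(r)=\inf A\geq\inf B=\xi_e(r)$. Combining the two inequalities yields the claim.

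I expect the only delicate point to be the non-strict monotonicity of $T$: since $T$ need be neither injective nor surjective, the fiber $B$ may be a nondegenerate interval and $A$ strictly larger, so one cannot argue via a pointwise inverse. The case split above (on whether $u<u_0$ or $u\geq u_0$) is precisely what circumvents this, turning ``$T(u)\leq r$ below the fiber'' into ``$T(u)=r$'' via the decreasing property. The appeal to Theorem~\ref{theo:xi} Part~1 is what makes the two infima comparable on the common domain $\Up$ in the first place.
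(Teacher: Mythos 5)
Your proof is correct, but it takes a different (and more self-contained) route than the paper's. The paper first decomposes the set $\{u \in \Up: T(u) \leq r\}$ into the union of fibers $T^{-1}(\{\rp\})$ over $\rp \in \Ra$ with $\rp \leq r$, obtaining $\xi(r) = \inf\{\xi_e(\rp): \rp \in \Ra,\ \rp \leq r\}$, and then concludes by proving that $\xi_e$ is strictly decreasing on $\Ra$ — a fact it derives from the strict inclusions $\OX(\rp) \subset \OX(r)$ and the identity $\OX(r) = \{x: U(x) \geq \xi_e(r)\}$ established earlier in the appendix. You instead compare the two sets $A = \{u \in \Up: T(u) \leq r\}$ and $B = T^{-1}(\{r\})$ head-on, showing $\inf B$ is a lower bound of $A$ by the two-case argument ($u \geq u_0$, or $u < u_0$ forces $T(u) = r$ by monotonicity of $T$), which uses only that $T$ is decreasing and that $r \in \Ra$ makes $B$ nonempty. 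What your version buys is independence from the $\OX$ machinery and from the strict monotonicity of $\xi_e$; what the paper's version buys is that the intermediate facts (the fiber decomposition and the strict decrease of $\xi_e$) are reused elsewhere in the appendix, so little is wasted. Both arguments share the same load-bearing hypothesis — the monotonicity of $T$ together with nonemptiness of the fiber at $r$ — and both rely on Part 1 of Theorem \ref{theo:xi} to restrict the infimum defining $\xi$ from $\R$ to $\Up$, so the two infima live over comparable sets.
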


\begin{proof}
\begin{align*}
\xi(r \in \Ra) &= \inf\{u \in \Up: \Pro(x \in \X: U(x) \geq u) \leq r\} \\
&= \inf\{u \in \Up: \Pro(x \in \X: U(x) \geq u) = \rp, \forall \rp \in \Ra, \rp \leq r\} \\
&= \inf\{\xi_e(\rp): \forall \rp \in \Ra, \rp \leq r\}
\end{align*}

\begin{align*}
\forall (\rp, r) \in \Ra^2: \quad \rp < r & \implies \OX(\rp) \subset \OX(r) \\
& \implies \{ x \in \X: U(x) \geq \xi_e(\rp) \} \subset \{ x \in \X: U(x) \geq \xi_e(r) \} \\
& \implies \xi_e(\rp) > \xi_e(r)
\end{align*}

Therefore, $\xi(r \in \Ra) = \xi_e(r)$.
\end{proof}

\begin{cor}
$\forall r \in [0, 1]\; \xi(r) = \xi(N(r)) = \xi_e(N(r))$
\end{cor}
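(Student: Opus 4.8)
The plan is simply to chain the two theorems just proved, since the corollary is an immediate composition of Theorem \ref{theo:xi} and Theorem \ref{theo:xib}. First I would invoke Theorem \ref{theo:xi}, which already establishes the first equality: for every $r \in [0,1]$, $\xi(r) = \xi(N(r))$. This leaves only the second equality $\xi(N(r)) = \xi_e(N(r))$ to be justified.

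The one point that needs a (one-line) verification is that the argument $N(r)$ always lands in the set $\Ra$, so that Theorem \ref{theo:xib} — which is stated only for arguments in $\Ra$ — is applicable. This is immediate from the definition $N(r) = \min \{r^\prime \in \Ra: r^\prime \geq r\}$: since $\Ra$ contains $1$, the set $\{r^\prime \in \Ra: r^\prime \geq r\}$ is nonempty, and because $\Ra$ is a union of closed intervals the infimum is attained and belongs to $\Ra$. Hence $N(r) \in \Ra$ for all $r \in [0,1]$.

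With $N(r) \in \Ra$ in hand, I would apply Theorem \ref{theo:xib} at the point $N(r)$ to obtain $\xi(N(r)) = \xi_e(N(r))$. Composing this with the first equality yields
\[
\xi(r) = \xi(N(r)) = \xi_e(N(r)) \qquad \forall r \in [0,1],
\]
which is exactly the claim.

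There is essentially no obstacle here: the substantive content lives entirely in Theorems \ref{theo:xi} and \ref{theo:xib}, and the corollary merely records their composition, the only nontrivial (yet routine) check being the membership $N(r) \in \Ra$ needed to legitimately feed $N(r)$ into Theorem \ref{theo:xib}.
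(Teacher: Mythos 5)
Your proposal is correct and matches the paper's (implicit) argument: the corollary is stated without proof precisely because it is the composition of Theorem \ref{theo:xi} with Theorem \ref{theo:xib} applied at $N(r)$. Your added one-line check that $N(r)\in\Ra$ (so that Theorem \ref{theo:xib} is applicable) is the only detail the paper leaves unsaid, and you verify it correctly.
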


\begin{cor}
\label{cor:ox_xr}
$\OX(r \in \Ra) = \{ x \in \X: U(x) \geq \xi(r) \}$
\end{cor}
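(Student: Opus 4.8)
The plan is to combine the two facts established immediately above the statement, so that the corollary follows by pure substitution with no extra analysis. The subsection ``Alternative definition of $\OX$, with $\xi$'' already derived, for every $r \in \Ra$,
$$\OX(r) = \{ x \in \X : U(x) \geq \xi_e(r) \},$$
where $\xi_e(r) = \inf(T^{-1}(\{r\}))$ and the passage from the condition $T(U(x)) \leq r$ to $U(x) \geq \xi_e(r)$ used that $T$ is decreasing. Independently, Theorem \ref{theo:xib} states precisely that $\xi(r) = \xi_e(r)$ for all $r \in \Ra$. Substituting this equality of thresholds into the displayed characterization yields the claim directly.

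More explicitly, I would fix $r \in \Ra$ and rewrite the defining set of $\OX(r)$ by replacing the threshold $\xi_e(r)$ with $\xi(r)$, justified pointwise by Theorem \ref{theo:xib}. Since the two thresholds coincide, the superlevel sets $\{x \in \X : U(x) \geq \xi_e(r)\}$ and $\{x \in \X : U(x) \geq \xi(r)\}$ are literally the same set, giving $\OX(r) = \{x \in \X : U(x) \geq \xi(r)\}$ as required.

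There is no genuine obstacle here: all of the analytic content has already been absorbed into the two inputs (the monotonicity of $T$ used to get the $\xi_e$ form, and the monotonicity of $\xi_e$ used inside Theorem \ref{theo:xib}). The only point worth a quick sanity check is the domain: both the alternative definition of $\OX$ and Theorem \ref{theo:xib} are asserted for $r \in \Ra$, which is exactly the domain over which the corollary is stated, so one does not need the auxiliary map $N$ of Theorem \ref{theo:xi} to extend the identity to all of $[0,1]$ for this particular statement.
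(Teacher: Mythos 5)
Your proposal is correct and matches the paper's intended derivation exactly: the corollary is stated without a separate proof precisely because it follows by substituting the identity $\xi(r)=\xi_e(r)$ of Theorem~\ref{theo:xib} into the characterization $\OX(r)=\{x\in\X: U(x)\geq \xi_e(r)\}$ obtained in the preceding subsection. Your remark that the restriction to $r\in\Ra$ makes the map $N$ unnecessary here is also consistent with how the paper separates this corollary from the one covering all of $[0,1]$.
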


\subsubsection{Substitution of indicator functions}

\begin{theo}[Equivalence of indicator functions]
\label{theo:subid}
$\forall (x, \xp) \in \X^2$:
\begin{align*}
\one_{[\xp \in \OX(R(x))]}
&= \one_{[R(\xp) \leq R(x)]} & \quad \textit{(a)} \\
&= \one_{[U(\xp) \geq \xi_e(R(x))]} & \quad \textit{(b)}\\
&= \one_{[U(\xp) \geq \xi(R(x))]} & \quad \textit{(c)} \\
&= \one_{[U(\xp) \geq U(x)]} & \quad \textit{(d)}\\
\end{align*}
\end{theo}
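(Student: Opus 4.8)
The plan is to prove the four equalities by showing that, in each case, the condition inside the indicator is equivalent to the membership $\xp \in \OX(R(x))$, so that all five indicators agree pointwise. The key preliminary observation is that $R(x) \in \Ra$ for every $x \in \X$, since $R$ takes its values in $\Ra$; this lets me instantiate every characterization of $\OX(r)$ that was established for $r \in \Ra$ at the particular value $r = R(x)$.

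For (a) I would apply Theorem \ref{theo:alt_def_ox} at $r = R(x)$, which reads $\OX(R(x)) = \{\xp \in \X : R(\xp) \leq R(x)\}$; hence $\xp \in \OX(R(x)) \iff R(\xp) \leq R(x)$, giving the first equality. For (b) I would use the alternative description $\OX(r) = \{x \in \X : U(x) \geq \xi_e(r)\}$ obtained just before the relation between $T$, $U$ and $R$, again specialized to $r = R(x)$, so that membership is equivalent to $U(\xp) \geq \xi_e(R(x))$. The equality of (b) and (c) is then immediate from Theorem \ref{theo:xib}: because $R(x) \in \Ra$, we have $\xi_e(R(x)) = \xi(R(x))$, so the two conditions are literally identical; alternatively Corollary \ref{cor:ox_xr} yields $\OX(R(x)) = \{\xp : U(\xp) \geq \xi(R(x))\}$ directly.

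The one substantive step is (d). Combining Theorem \ref{theo:tur}, which states $R(x) = T(U(x))$, with the definition of $\OX$ gives $\OX(R(x)) = \{\xp \in \X : T(U(\xp)) \leq T(U(x))\}$, so I must show $T(U(\xp)) \leq T(U(x)) \iff U(\xp) \geq U(x)$. The implication $U(\xp) \geq U(x) \Rightarrow T(U(\xp)) \leq T(U(x))$ is exactly the monotonicity of $T$ recorded earlier, so that direction is routine. The converse is the main obstacle, and I would argue it by contraposition: assuming $U(\xp) < U(x)$, monotonicity forces $T(U(\xp)) \geq T(U(x))$, so if we also had $T(U(\xp)) \leq T(U(x))$ then $T(U(\xp)) = T(U(x))$, i.e. $\Pro(U(\xp) \leq U < U(x)) = 0$. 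The point to settle is that a level set of $T$ cannot straddle two distinct image values, equivalently that $\xi(R(x)) = U(x)$ so that condition (c) collapses onto condition (d); this is precisely where the hypothesis that $\Up^*$ is a union of closed intervals, together with the meaning $T(u) = \Pro(x : U(x) \geq u)$, must be invoked to exclude genuine flats of $T$ between image points. I expect this distinction between true plateaus of $T$ and its behaviour at values actually attained by $U$ to be the delicate part of the argument; once it is granted, the whole statement reduces to the chain of substitutions above.
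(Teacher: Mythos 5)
Your treatment of (a), (b) and (c) coincides with the paper's: each is the corresponding characterization of $\OX(r)$ (Theorem \ref{theo:alt_def_ox}, the $\xi_e$-description of $\OX$, Corollary \ref{cor:ox_xr}) specialized at $r=R(x)\in\Ra$. The problem is (d). You correctly isolate the only nontrivial direction, $T(U(\xp))\leq T(U(x))\Rightarrow U(\xp)\geq U(x)$, but you then stop at ``once it is granted'': the claim that a level set of $T$ cannot straddle two distinct attained values of $U$ \emph{is} the content of (d), so deferring it means (d) is not actually proved. For what it is worth, the paper closes this step with the single annotation ``$T$ is decreasing'', implicitly invoking the biconditional $u_1\leq u_2\iff T(u_2)\leq T(u_1)$ asserted in the earlier remark, of which only one direction is ever justified; so the paper has the same hole you flagged rather than an argument you failed to find.

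Your suspicion that this step is delicate is well founded: as stated, the pointwise identity (d) can fail under the paper's hypotheses. Take $\X=[0,1]$ with $\Pro$ the Lebesgue measure, $U\equiv 0$ on $[0,1/2]\setminus\{1/4\}$, $U(1/4)=1/2$, and $U\equiv 1$ on $(1/2,1]$; the image $\{0,1/2,1\}$ is a union of closed (degenerate) intervals. For $\xp=1/4$ and any $x\in(1/2,1]$ one gets $T(U(\xp))=T(U(x))=1/2$, hence $R(\xp)\leq R(x)$ and the indicators in (a)--(c) equal $1$, while $U(\xp)=1/2<1=U(x)$ makes the indicator in (d) equal $0$. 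What is true, and what the downstream integral identities (Theorem \ref{theo:vn_vnx} and the $\AUNUC$ transformation) actually require, is agreement for $\Pro$-almost every $\xp$ at each fixed $x$: the discrepancy set is $\{\xp: U(\xp)<U(x),\ \Pro(U(\xp)\leq U<U(x))=0\}$, which is contained in an increasing union of $\Pro$-null sets and is therefore null. So either the conclusion of (d) should be weakened to an a.e.\ identity in $\xp$, or one should add the non-degeneracy assumption that every value attained by $U$ is attained on a set of positive $\Pro$-measure (which forces $T(U(\xp))>T(U(x))$ whenever $U(\xp)<U(x)$); under either fix your chain of substitutions goes through.
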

\begin{proof}

(a) is based on Theorem \ref{theo:alt_def_ox}.

(b) is based on the definition on $\OX$ using $\xi_e$, with a substitution of $r$ with $R(x)$.

(c) is based on Corollary \ref{cor:ox_xr} from (b), with a substitution of $r$ with $R(x)$.

(d):
\begin{align*}
\OX(R(x \in \X)) &= \{ \xp \in \X: T(U(\xp)) \leq R(x) \}  \quad \textit{(Definition of $\OX$)}\\
&= \{ \xp \in \X: T(U(\xp)) \leq T(U(x)) \}  \quad \textit{(Theorem \ref{theo:tur})}\\
&= \{ \xp \in \X: U(\xp) \geq U(x) \}  \quad \textit{($T$ is decreasing)}
\end{align*}
\end{proof}

In the next section, we use that theorem to substitute indicator functions within $\VN$ and $\AUNUC$ integrals.

\subsection{Transformation of $\VN$ and $\AUNUC$}

Hereafter, we call $\uh$ the $U$ function.

\subsubsection{$\VN = \VN \circ N$}

\begin{theo}
\label{theo:vn_vnn}
$\VN(r \in [0, 1]) = \VN(N(r))$
\end{theo}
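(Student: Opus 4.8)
The plan is to reduce this identity directly to Theorem \ref{theo:xi}, which already establishes that $\xi(r) = \xi(N(r))$ for every $r \in [0,1]$. The key observation is that, by Definition \ref{def:up_curve}, the dependence of $\VN$ on its argument $r$ enters the integrand \emph{only} through the threshold $\xi(r)$ sitting inside the indicator $\one_{[\uh(x) \geq \xi(r)]}$; neither the measure $\Pro$ nor the integrand $\tau(x)$ carries any dependence on $r$. So the whole statement is essentially a corollary of the threshold identity already proved.

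First I would write out both sides explicitly from the definition:
\[
\VN(r) = \int_\X \tau(x)\,\one_{[\uh(x) \geq \xi(r)]}\,\D\Pro(x),
\qquad
\VN(N(r)) = \int_\X \tau(x)\,\one_{[\uh(x) \geq \xi(N(r))]}\,\D\Pro(x).
\]
Then, invoking Theorem \ref{theo:xi} to get $\xi(r) = \xi(N(r))$, I would note that for every fixed $x \in \X$ the two comparison conditions $\uh(x) \geq \xi(r)$ and $\uh(x) \geq \xi(N(r))$ are the same, so the indicators coincide pointwise, $\one_{[\uh(x) \geq \xi(r)]} = \one_{[\uh(x) \geq \xi(N(r))]}$. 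Since the integrands then agree everywhere on $\X$ and the integrating measure $\Pro$ is unchanged, the two integrals are equal, which is exactly $\VN(r) = \VN(N(r))$.

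I expect essentially no obstacle at this stage: all of the genuine analytic content—reconciling the relaxed domain $r \in [0,1]$ and the $\leq$ in the definition of $\xi$ against the set $\Ra$ of attained values and the rounding map $N$, including the passage from ``$u \in \R$'' to ``$u \in \Up$''—was carried out already inside Theorem \ref{theo:xi} (Parts 1 and 2 of its proof). The one point worth double-checking is purely bookkeeping: that the $\VN$ appearing here is literally the map of Definition \ref{def:up_curve}, so that no implicit normalization, domain restriction, or secondary use of $r$ reintroduces an $r$-dependence outside the threshold $\xi(r)$ that Theorem \ref{theo:xi} controls. Once that is confirmed, the equality follows immediately by pointwise substitution of the indicator.
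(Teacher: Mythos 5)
Your proposal is correct and follows exactly the paper's own argument: expand $\VN$ from Definition \ref{def:up_curve}, substitute $\xi(r) = \xi(N(r))$ from Theorem \ref{theo:xi} inside the indicator, and conclude since nothing else in the integrand depends on $r$. No differences worth noting.
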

\begin{proof}
\begin{align*}
\VN(r \in [0,1]) &= \int_\X \tau(x) \one_{[\uh(x)\geq \xi(r)]} \D\Pro(x) \\
&= \int_\X \tau(x) \one_{[\uh(x)\geq \xi(N(r))]} \D\Pro(x) \quad \textit{(Theorem \ref{theo:xi})}\\
&= \VN(N(r))
\end{align*}
\end{proof}

Alternative formulation for $\VN$: $\VN(r \in [0, 1]) = \int_{\OX(N(r))} \tau(x) \D\Pro(x)$

\subsubsection{From $\VN$ to $\VNX$}

Let $\VNX$ be a variant of $\VN$:
$$\VNX(x \in \X) = \int_\X \tau(\xp) \one_{[\uh(\xp) \geq \uh(x)]} \D \Pro(\xp)$$

\begin{theo}
\label{theo:vn_vnx}
$\VN(R(x \in \X)) = \VNX(x)$
\end{theo}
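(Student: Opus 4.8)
\textbf{Goal restated.} I want to show that evaluating the separated-setting curve $\VN$ at the particular ratio $r = R(x)$ reproduces the joint-setting variant $\VNX(x)$. Since both quantities are integrals of the same integrand $\tau(\xp)\,\D\Pro(\xp)$ against an indicator function, the whole statement will reduce to checking that the two indicator functions coincide pointwise in $\xp$.

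\textbf{Plan.} First I would simply substitute $r = R(x)$ into the definition of $\VN$ from Definition \ref{def:up_curve}, obtaining
\[
\VN(R(x)) = \int_\X \tau(\xp)\,\one_{[\uh(\xp)\geq \xi(R(x))]}\,\D\Pro(\xp).
\]
The target $\VNX(x)$ is the same integral with the indicator $\one_{[\uh(\xp)\geq \uh(x)]}$. So the only thing to establish is the pointwise identity of indicators
\[
\one_{[\uh(\xp)\geq \xi(R(x))]} = \one_{[\uh(\xp)\geq \uh(x)]} \qquad \forall\,\xp \in \X,
\]
after which the two integrals are literally equal and the theorem follows.

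\textbf{Key step.} This identity is precisely the equivalence of clauses (c) and (d) in Theorem \ref{theo:subid} (with $U = \uh$), both of which are shown there to agree with $\one_{[\xp \in \OX(R(x))]}$. To invoke that theorem cleanly I must first confirm that $R(x)$ lies in $\Ra$, so that $\xi$ and the $\OX$-machinery apply: this is immediate from Theorem \ref{theo:tur}, which gives $R(x) = T(\uh(x))$, and $T$ takes values in $\Ra$ by construction. Chaining the definition of $\VN$ with this indicator substitution then yields $\VN(R(x)) = \VNX(x)$ directly.

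\textbf{Where the difficulty sits.} The proof of the present statement is short because all the genuine work has been front-loaded into Theorem \ref{theo:subid}, and ultimately into Theorems \ref{theo:xi} and \ref{theo:xib} reconciling the two thresholds $\xi$ and $\xi_e$ (the subtle mismatch being the domain of $r$ and the $\leq$ versus $=$ in the defining infima). Consequently the only real care required here is bookkeeping: verifying that $R(x)\in\Ra$ so that the $\xi(R(x)) = \xi_e(R(x))$ reduction of Theorem \ref{theo:xib} is available, and that no edge case arises from sets $\{\xp : \uh(\xp) \geq \uh(x)\}$ differing on a $\Pro$-null set. Once these are noted, there is no computation left to grind through.
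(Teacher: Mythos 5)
Your proof is correct and is essentially identical to the paper's: both substitute $r = R(x)$ into Definition \ref{def:up_curve} and then replace the indicator $\one_{[\uh(\xp)\geq \xi(R(x))]}$ by $\one_{[\uh(\xp)\geq \uh(x)]}$ via the equivalence of clauses (c) and (d) of Theorem \ref{theo:subid}. Your additional remark that $R(x)\in\Ra$ (via Theorem \ref{theo:tur}) is a correct piece of bookkeeping that the paper leaves implicit.
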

\begin{proof}
\begin{align*}
\VN(R(x \in \X)) &= \int_\X \tau(\xp) \one_{[\uh(\xp) \geq \xi(R(x))]} \D \Pro(\xp) \\
&= \int_\X \tau(\xp) \one_{[\uh(\xp) \geq \uh(x)]} \D \Pro(\xp) \quad \textit{(Theorem \ref{theo:subid})} \\
&= \VNX(x)
\end{align*}
\end{proof}

\subsubsection{From $\AUNUC$ to $\AUNUCX$}

Let $\AUNUCX$ be a variant of $\AUNUC$: 

$$\AUNUCX(x \in \X) = \int_\X \VNX(\xp) \one_{[\uh(\xp) \geq \uh(x)]} \D \Pro(\xp)$$

\begin{theo}
$\AUNUC(R(x \in \X)) = \AUNUCX(x)$
\end{theo}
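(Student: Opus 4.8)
The plan is to mirror the proof of Theorem \ref{theo:vn_vnx}, but one level higher: instead of rewriting a single indicator inside an integral over $\X$, I would turn the one–dimensional integral defining $\AUNUC$ over $[0,p]$ into a Lebesgue integral over $\X$ using exactly the machinery assembled in Section \ref{sec:app_integration}. Concretely, I would start from the definition $\AUNUC(p)=\int_{[0,p]}\VN(r)\,\D r$, evaluate it at $p=R(x)$, and write it as $\AUNUC(R(x))=\int_{[0,1]}\VN(r)\,\one_{[r\le R(x)]}\,\D r$. The goal is to recognize this as the anti-derivative $g$ from Section \ref{sec:app_integration} evaluated at $R(x)$.

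The key steps are as follows. First, I would take $f=\VN$ restricted to $\Ra$ and observe, via Theorem \ref{theo:vn_vnn}, that $\VN(r)=\VN(N(r))=f(N(r))$, so that $\int_0^{R(x)}\VN(r)\,\D r=g(R(x))$ in the notation of Section \ref{sec:app_integration}. Second, I would note that $R(x)\in\Ra$: indeed $R=T\circ U$ by Theorem \ref{theo:tur}, and $\Ra$ is by construction the image of $T$, so evaluating $g$ at $R(x)$ lands on a point of $\Ra$ (this is important, since on $[0,1]\setminus\Ra$ the function $g$ only interpolates linearly and the clean identity fails). Third, I would invoke the identity established in Section \ref{sec:app_integration}, namely $g(r\in\Ra)=\int_\X f(R(\xp))\,\one_{[R(\xp)\le r]}\,\D\Pro(\xp)$, with $r=R(x)$, to obtain $\AUNUC(R(x))=\int_\X \VN(R(\xp))\,\one_{[R(\xp)\le R(x)]}\,\D\Pro(\xp)$.

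Finally, I would finish by substituting the two already-proven equivalences. On the integrand, Theorem \ref{theo:vn_vnx} gives $\VN(R(\xp))=\VNX(\xp)$, and Theorem \ref{theo:subid} (the equivalence of (a) and (d)) gives $\one_{[R(\xp)\le R(x)]}=\one_{[\uh(\xp)\ge \uh(x)]}$. These two replacements turn the integral into $\int_\X \VNX(\xp)\,\one_{[\uh(\xp)\ge \uh(x)]}\,\D\Pro(\xp)$, which is exactly $\AUNUCX(x)$ by definition, establishing the claim.

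The main obstacle I expect is bookkeeping about the hypotheses rather than any deep difficulty: I must check that $\VN$ is an admissible $f$ for Theorem \ref{theo:transfo}, i.e. that it satisfies $f>0$ or $\E|f(R)|<\infty$. Since for binary outcomes $\tau(x)\in[-1,1]$, the curve $\VN$ is bounded, hence integrable, so the condition holds; this should be stated explicitly. The other delicate point, already flagged above, is to keep the evaluation confined to $r=R(x)\in\Ra$ so that the $g$-identity applies verbatim and the linear interpolation on the gaps of $\Ra$ plays no role.
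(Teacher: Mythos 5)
Your proposal is correct and follows essentially the same route as the paper's own proof: rewrite $\AUNUC(R(x))$ as a truncated integral of $\VN\circ N$, use the fact that $R(x)\in\Ra$ together with the anti-derivative identity from Section \ref{sec:app_integration} (i.e.\ Theorem \ref{theo:transfo}) to pass to an integral over $\X$, and then conclude by Theorems \ref{theo:subid} and \ref{theo:vn_vnx}. Your explicit check that $\VN$ is bounded (hence an admissible $f$) is a small, welcome addition the paper leaves implicit.
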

\begin{proof}

\begin{align*}
\AUNUC(p \in [0,1]) &= \int_{[0,p]} \VN(r) \D r \\
&= \int_0^1 \VN(r) \one_{[r \leq p]} \D r \\
&= \int_0^1 \VN(N(r)) \one_{[r \leq p]} \D r \quad \textit{(Theorem \ref{theo:vn_vnn})}
\end{align*}

Now we use the results from Section \ref{sec:app_integration} to limit the domain of $\AUNUC$ to $\Ra$. Please remind that the parts of $\AUNUC$ on $[0, 1] \setminus \Ra$ (i.e. the "gaps") are filled with a linear interpolation.

\begin{align*}
\AUNUC(R(x \in \X)) &= \int_0^1 \VN(N(r)) \one_{[r \leq R(x)]} \D r \\
&= \int_0^1 \VN(N(r)) \one_{[N(r) \leq R(x)]} \D r \quad \textit{($R(x) \in \Ra$, by definition)} \\
&= \int_\X \VN(R(\xp)) \one_{[R(\xp) \leq R(x)]} \D \Pro(\xp) \quad \textit{(Theorem \ref{theo:transfo})} \\
&= \int_\X \VN(R(\xp)) \one_{[\uh(\xp) \geq \uh(x)]} \D \Pro(\xp) \quad \textit{(Theorem \ref{theo:subid}}) \\
&= \int_\X \VNX(\xp) \one_{[\uh(\xp) \geq \uh(x)]} \D \Pro(\xp) \quad \textit{(Theorem \ref{theo:vn_vnx})} \\
&= \AUNUCX(x)
\end{align*}
\end{proof}

Alternative formulations for $AUNUC$ on $\Ra$:
$$\AUNUC(r \in \Ra) = \int_{\OX(r)} \VNX(x) \D\Pro(x) = \int_{\OX(r)} \VN(R(x)) \D\Pro(x)$$

\newpage
\section{Working with Unbalanced Datasets: Demonstrations}
\label{appendix:unbalanced_dataset}

This appendix contains some demonstrations for section \ref{section:unbalanced_dataset}, in addition to appendix \ref{appendix:lebesgue}.

\subsection{$\VN$ and $\VNX$}

\begin{align*}
\VN(r \in [0,1]) &= \int_\X \tau(x) \one_{[\uh(x)\geq \xi(r)]} \Pro(x)\D x \\
&= \int_\X \left(y^{(1)}(x) - y^{(0)}(x)\right) \one_{[\uh(x)\geq \xi(r)]} \Pro(x)\D x \\
&= \int_\X \left( \sum_{t \in \{0, 1\}} \tauh(x, t) \right) \one_{[\uh(x)\geq \xi(r)]} \Pro(x)\D x \\
&= \int_\X 2 \left( \sum_{t \in \{0, 1\}} \tauh(x, t) \Sam_u(t) \right) \one_{[\uh(x)\geq \xi(r)]} \Pro(x)\D x \quad \textit{where} \quad \Sam_u(t) = 1/2 \\
&= \int_{\XS} \tauh(x, t) \one_{[\uh(x)\geq \xi(r)]} 2\,\Pro(x) \Sam_u(t)\D (x,t) \quad \textit{(Fubini's theorem)} \\
&= \int_{\XS} \tauh(x, t) \one_{[\uh(x)\geq \xi(r)]} 2\,\Qro_u(x, t)\D (x,t) \\
&= \int_{\XS} \tauh(x, t) \one_{[\uh(x)\geq \xi(r)]} 2\,\Pro(x) \Sam_u(t) \frac{\Sam_o(t|x)}{\Sam_o(t|x)}\D (x,t) \\
&= \int_{\XS} \tauh(x, t) \one_{[\uh(x)\geq \xi(r)]} \frac{\Qro_o(x, t)}{\Sam_o(t|x)}\D (x,t) \\
\end{align*}

Similarly, we show that:
\begin{align*}
\VNX(x \in \X) &= \int_\X \tau(\xp) \one_{[\uh(\xp) \geq \uh(x)]} \Pro(\xp)\D\xp \\
&= \int_{\XS} \tauh(\xp, t) \one_{[\uh(\xp)\geq \uh(x)]} 2\,\Qro_u(\xp, t)\D (\xp,t) \\
&= \int_{\XS} \tauh(\xp, t) \one_{[\uh(\xp)\geq \uh(x)]} \frac{\Qro_o(\xp, t)}{\Sam_o(t|\xp)}\D (\xp,t)
\end{align*}

\subsection{$\AUNUCX$}

\begin{align*}
\AUNUCX(x \in \X) &= \int_\X \VNX(\xp) \one_{[\uh(\xp) \geq \uh(x)]} \Pro(\xp) \D(\xp) \\
&= \int_\X \left( \sum_{t \in \{0, 1\}}{\VNX(\xp)} \Sam_u(t) \right) \one_{[\uh(\xp) \geq \uh(x)]} \Pro(\xp) \D(\xp) \quad \textit{where} \quad \Sam_u(t) = 1/2 \\
&= \int_{\XS} \VNX(\xp) \one_{[\uh\xp)\geq \uh(x)]} \Pro(\xp)\Sam_u(t)\D(\xp, t) \quad \textit{(Fubini's theorem)} \\
&= \int_{\XS} \VNX(\xp) \one_{[\uh\xp)\geq \uh(x)]} \Qro_u(\xp, t)\D(\xp, t) \\
&= \int_{\XS} \VNX(\xp) \one_{[\uh(\xp)\geq \uh(x)]} \Pro(\xp)\Sam_u(t)\frac{\Sam_o(t|\xp)}{\Sam_o(t|\xp)}\D(\xp, t) \\
&= \int_{\XS} \VNX(\xp) \one_{[\uh(\xp)\geq \uh(x)]} \frac{\Qro_o(\xp, t)}{2\,\Sam_o(t|\xp)}\D(\xp, t)
\end{align*}

\newpage
\section{Unbiasedness of $V_1$ and $V_2$}
\label{appendix:v1_v2}

In this appendix, we show that both sets of rules $V_1$ and $V_2$, as well as their linear combination $V_\nu = (1 - \nu) V_1 + \nu V_2$ are unbiased \textit{pointwise} estimators of the uplift curve.

\subsection{Distributions}

Given a dataset (i.e. a sample) of size $N$, \textit{sorted by decreasing predicted uplift}, and considering a point $r \in [0, 1]$ of the uplift curve $V$: by construction, $V(r)$ only depends on the first $rN$ individuals. More precisely, the specific ordering of those $rN$ individuals does not impact the $V(r)$ value. $V(r)$ is completely defined by the distribution of the $rN$ individuals within 8 groups: the Cartesian product of the 4 individual categories ($\{CO, ST, LC, SD\}$, defined in Section \ref{sec:toyexample1}) with the 2 treatment/control groups ($\{0, 1\}$).

For a given $p$ value, we thus consider that the each $i$ individual from the $pN$ first elements of the dataset is represented by two random variables:

\begin{itemize}
    \item i.i.d. category $C$, following a categorical distribution with 8 probabilities depending on $r$: $\beta_c(r)$ with $c$ in $\{CO, ST, LC, SD\}$.
    \item treatment $T \sim \text{Ber}(\alpha)$ where $\alpha$ is the propensity to be treated, that is independent of $C$ and $r$ (RCT assumption).
\end{itemize}

Based on definitions of the 4 categories, we can express $P(Y=1|T=1, r)$ and $P(Y=0|T=1, r)$ using $\beta_c(r)$:
\begin{align*}
p_1(r) &= P(Y=1|T=1, r) = \beta_{CO}(r) + \beta_{ST}(r) \\
p_0(r) &= P(Y=1|T=0, r) = \beta_{ST}(r) + \beta_{SD}(r)
\end{align*}

Think of $p_1(r) - p_0(r) = \beta_{CO}(r) - \beta_{SD}(r)$ as being the uplift.



\subsection{Estimators for $V_1$, $V_2$}

Let $Q_1$ be the estimator of the increment presented as $V_1$ rule, and $Q_2$ be the estimator of the increment presented as $V_2$ rule. With a balanced treatment/control distribution (i.e $\alpha = 0.5$), we can use:
\begin{itemize}
    \item $Q_1 = Y(2T-1)$
    \item $Q_2 = (Y-1)(2T-1)$
\end{itemize}

Table \ref{table:V1_V2_rules} summarizes the $V_1$ and $V_2$ sets of rules, as well as the definition of the 4 categories. Column \textit{weight} corresponds to the re-balancing factors that need to be applied on each individual contribution when dealing with unbalanced datasets. Indeed, as shown in Sections \ref{sec:toy_ex2} and \ref{section:unbalanced_dataset}, for any other values of $\alpha$, the uplift curve should not be directly built with +1 or -1 increments. Instead, we have to re-balance the two populations by applying factors on the +1/-1 increments. Those factors are based on $\alpha$ when $T=1$ and on $1 - \alpha$ when $T=0$. The re-scaled increments are called $Q_1(\alpha)$ and $Q_2(\alpha)$ in Table \ref{table:V1_V2_rules}. Note that when $alpha = 0.5$, we don't get the classical +1/-1 increments, but +2/-2 increments. We ignore the missing $1/2$ constant factor for the sake of simplicity, and it does not actually change the result of this appendix.


\begin{table}[!ht]
\begin{center}

\begin{tabular}{ | c| c || c | c | c | c | c |} 
\hline
$Y$ & $T$ & $V_1$ rules & $V_2$ rules & weight & $Q_1(\alpha)$ & $Q_2(\alpha)$\\ 
\hline 
\tikzmarknode{r1}{0} & 0 & 0 & $+1$ & $1 / (1 - \alpha)$ & 0 & $+ 1 / (1 - \alpha)$\\
\tikzmarknode{r2}{0} & 1 & 0 & $-1$ & $1 / \alpha$  & 0 & $- 1 / \alpha$\\  
\tikzmarknode{r3}{1} & 0 & $-1$ & 0 & $1 / (1 - \alpha)$ & $ - 1 / (1 - \alpha)$ & 0 \\
\tikzmarknode{r4}{1} & 1 & $+1$ & 0 & $1 / \alpha$  & $+ 1 / \alpha$ & 0 \\
\hline
\end{tabular}

\begin{tikzpicture}[<->, orange, >=stealth, overlay, remember picture,baseline={(0,0)}, bend right=90]

\node[left of=r1, node distance=2mm](rl1){};
\node[left of=r2, node distance=2mm](rl2){};
\node[left of=r3, node distance=2mm](rl3){};
\node[left of=r4, node distance=2mm](rl4){};
\node[left of=r2, node distance=2cm](rl){$CO$};
\draw [transform canvas={xshift=-3mm}] plot [smooth, tension=2] coordinates { (rl1) (rl) (rl4)};
\draw (rl1) edge node[midway,left] {$LC$} (rl2);
\draw (rl2) edge node[midway,left] {$SD$} (rl3);
\draw (rl3) edge node[midway,left] {$ST$} (rl4);
 \end{tikzpicture}
 \caption{$V_1$ and $V_2$ sets of rules, with the 4 categories and the re-scaled increments $Q_1$ and $S_2$}
\label{table:V1_V2_rules}
\end{center}
\end{table}

\subsection{Estimators for the uplift curve}

Let $\Voh(r)$, $\Vth(r)$ and $\Vnh(r)$ be point-wise estimators of the uplift curve, defined as:
\begin{align*}
\Voh(r) &= \sum_{i}^{rN}{Q_{1i}(\alpha)} \\
\Vth(r) &= \sum_{i}^{rN}{Q_{2i}(\alpha)} \\
\Vnh(r) &= (1 - \nu) \Voh(r) + \nu \Vth(r)  = \sum_{i}^{rN}{\big((1 - \nu) Q_{1i}(\alpha) + \nu Q_{2i}(\alpha)\big)} = \sum_{i}^{rN}{Q_{\nu i}(\alpha)} \quad \forall \nu \in [0, 1]
\end{align*}

All $Q_{1i}$ and $Q_{2i}$ being independent, we thus have:
\begin{align*}
E[\Voh(r)] &= rN E[Q_1(\alpha)] \\
E[\Vth(r)] &= rN E[Q_2(\alpha)] \\
E[\Vnh(r)] &= rN \big((1 - \nu)E[Q_1(\alpha)] + \nu E[Q_2(\alpha)] \big)
\end{align*}

\begin{align*}
E[Q_1(\alpha)] &= \frac{P(Y=1, T=1|r)}{\alpha} - \frac{P(Y=1, T=0|r)}{1 - \alpha} \\
&= \frac{P(Y=1|T=1, r) P(T=1)}{\alpha} - \frac{P(Y=1|T=0, r) P(T=0)}{1 - \alpha} \\
&= \frac{p_1(r) \alpha}{\alpha} -  \frac{p_0(r)(1 - \alpha)}{1 - \alpha} \\
&= p_1(r) - p_0(r)
\end{align*}

\begin{align*}
E[Q_2(\alpha)] &= \frac{P(Y=0, T=0|r)}{1 - \alpha} - \frac{P(Y=0, T=1|r)}{\alpha} \\
&= \frac{P(Y=0|T=0, r) P(T=0)}{1 - \alpha} - \frac{P(Y=0|T=1, r) P(T=1)}{\alpha} \\
&= \frac{(1 - P(Y=1|T=0, r)) P(T=0)}{1 - \alpha} - \frac{(1 - P(Y=1|T=1, r)) P(T=1)}{1 - \alpha} \\
&= \frac{(1 - p_0(r)) (1 - \alpha)}{1 - \alpha} -  \frac{(1 - p_1(r))\alpha}{\alpha} \\
&= (1 - p_0(r)) - (1 - p_1(r)) = p_1(r) - p_0(r) = E[Q_1(\alpha)]
\end{align*}

\begin{align*}
(1 - \nu)E[Q_1(\alpha)] + \nu E[Q_2(\alpha)] &= 
(1 - \nu)E[Q_1(\alpha)] + \nu E[Q_1(\alpha)] \\
&= E[Q_1(\alpha)] = p_1(r) - p_0(r)
\end{align*}

Therefore, $\forall r \in [0, 1], \forall \nu \in [0, 1]$
$$\E[\Voh(r)] = \E[\Vth(r)] = \E[\Vnh(r)] = r N (p_1(r) - p_0(r))$$

\subsection{Preparation for the variance reduction of $\Vnh$}
\begin{align*}
E[Q_1(\alpha)^2] &= \frac{P(Y=1, T=1|r)}{\alpha^2} + \frac{P(Y=1, T=0|r)}{(1 - \alpha)^2} \\
&= \frac{P(Y=1|T=1, r) P(T=1)}{\alpha^2} + \frac{P(Y=1|T=0, r) P(T=0)}{(1 - \alpha)^2} \\
&= \frac{p_1(r) \alpha}{\alpha^2} +  \frac{p_0(r)(1 - \alpha)}{(1 - \alpha)^2} \\
&= \frac{p_1(r)}{\alpha} + \frac{p_0(r)}{1 - \alpha}
\end{align*}

\begin{align*}
E[Q_2(\alpha)^2] &= \frac{P(Y=0, T=0|r)}{(1 - \alpha)^2} + \frac{P(Y=0, T=1|r)}{\alpha^2} \\
&= \frac{P(Y=0|T=0, r) P(T=0)}{(1 - \alpha)^2} + \frac{P(Y=0|T=1, r) P(T=1)}{\alpha^2} \\
&= \frac{(1 - P(Y=1|T=0, r)) P(T=0)}{(1 - \alpha)^2} + \frac{(1 - P(Y=1|T=1, r)) P(T=1)}{1 - \alpha} \\
&= \frac{(1 - p_0(r)) (1 - \alpha)}{(1 - \alpha)^2} + \frac{(1 - p_1(r))\alpha}{\alpha^2} \\
&= \frac{1 - p_0(r)}{1 - \alpha} + \frac{1 - p_1(r)}{\alpha}
\end{align*}

Thus: $$\E[Q_1(\alpha)^2] + \E[Q_2(\alpha)^2] = \frac{1}{\alpha} + \frac{1}{1 - \alpha} = \frac{1}{\alpha(1 - \alpha)}$$

\subsection{Variance reduction of $\Vnh$}

All $Q_{1i}$ and $Q_{2i}$ being independent, we have: 
$$Var(\Vnh(r)) = r N Var(Q_\nu(\alpha))$$

So, minimizing the variance of $\Vnh$ is equivalent as minimizing the variance of $Q_\nu(\alpha)$.

\begin{align*}
Var(Q_\nu(\alpha)) &= Var((1 - \nu) Q_1(\alpha) + \nu Q_2(\alpha)) \\
&= (1 - \nu)^2 Var(Q_1(\alpha)) + \nu^2 Q_2(\alpha) + 2(1 - \nu)\nu Cov(Q_1(\alpha), Q_2(\alpha))
\end{align*}

Note that $Cov(Q_1(\alpha), Q_2(\alpha))=\cancel{\E[Q_1(\alpha)  Q_2(\alpha)]}-\E[Q_1(\alpha)]\E[Q_2(\alpha)]$.\\

Differentiating $\nu \mapsto Var(Q_{\nu})$ w.r.t. $\nu$ gives:
\begin{align*}
\frac{d(Var(Q_{\nu}(\alpha)))}{d(\nu)} &= -2(1-\nu)Var(Q_1(\alpha))+2\nu Var(Q_2(\alpha))+2(1-2\nu)Cov(Q_1(\alpha),Q_2(\alpha)) \\
&= 2 \nu \big(Var(Q_1(\alpha)) + Var(Q_2(\alpha)) - 2 Cov(Q_1(\alpha),Q_2(\alpha))\big) \\
& -2 \big(Var(Q_1(\alpha)) - 2 Cov(Q_1(\alpha),Q_2(\alpha)) \big) \\
&= 2 \nu \big(\E[Q_1(\alpha)^2] - \E[Q_1(\alpha)]^2 + \E[Q_2(\alpha)^2] - \E[Q_2(\alpha)]^2 + 2 \E[Q_1(\alpha)]\E[Q_2(\alpha)] \big) \\
& - 2 \big(\E[Q_1(\alpha)^2] \; \cancel{- \E[Q_1(\alpha)]^2} + \cancel{\E[Q_1(\alpha)]\E[Q_2(\alpha)]}\big) \\
&= 2 \nu \big( \E[Q_1(\alpha)^2] + \E[Q_2(\alpha)^2] - \cancel{(\E[Q_1(\alpha)] - \E[Q_2(\alpha)])^2} \big) - 2 \E[Q_1(\alpha)^2] \\
&= \frac{2 \nu}{\alpha (1 - \alpha)} - 2 \bigg( \frac{p_1(r)}{\alpha} + \frac{p_0(r)}{1 - \alpha} \bigg)
\end{align*}

That is a linear function of $\nu$ with a positive slope: $\frac{2}{\alpha(1 - \alpha})> 0$ $\forall \alpha \in (0, 1)$.

Thus, solving $\frac{d(Var(Q_{\nu}))}{d(\nu)} = 0$ gives the $\nu$ value that minimizes $Var(Q_{\nu})$:

\begin{align*}
\nu =& \frac{\E[Q_1^2](\alpha)}{\E[Q_1^2(\alpha)] + \E[Q_2^2](\alpha)} \\
    =& \left(\frac{p_1(r)}{\alpha} + \frac{p_0(r)}{1 - \alpha}\right) \alpha (1 - \alpha) \\
    =& p_1(r)(1-\alpha) + p_0(r) \alpha
\end{align*}

Or equivalently: $$\nu + P(Y=1|r) = p_0(r) + p_1(r)$$

In particular, when $\alpha = 0.5$, we have $$\nu = \frac{p_0(r) + p_1(r)}{2} = P(Y=1|r)$$


\end{document}